\documentclass[11pt]{article}
\usepackage{amsthm, amsmath, amssymb, amsfonts, url, booktabs, tikz, setspace, fancyhdr, bm}
\usepackage{geometry}
\usepackage{hyperref, enumerate}
\usepackage[shortlabels]{enumitem}
\usepackage[babel]{microtype}
\usepackage[english]{babel}
\usepackage[capitalise]{cleveref}
\usepackage{comment}
\usepackage{bbm}
\usepackage{csquotes}
\usepackage{hyphenat}
\usepackage{mathabx}
\usepackage{tikz}
\usepackage{graphicx}
\usepackage{float}
\usepackage{xcolor}
\usepackage{mathtools}
\usetikzlibrary{positioning, arrows.meta, shapes.geometric}
\usepackage{amsmath}

\counterwithin{figure}{section}

\newtheorem{theorem}{Theorem}[section]

\newtheorem{lemma}[theorem]{Lemma}
\newtheorem{problem}[theorem]{Problem}
\newtheorem{cor}[theorem]{Corollary}
\newtheorem{claim}[theorem]{Claim}

\newtheorem{case}{Case}

\usetikzlibrary{decorations.pathmorphing}

\theoremstyle{definition}

\newtheorem*{defn-non}{Definition}

\newlist{Case}{enumerate}{2}
\setlist[Case, 1]{%
    label           =   {\bfseries Case \arabic*.},
    labelindent=1em ,labelwidth=1.3cm, labelsep*=1em, leftmargin =!
}
\setlist[Case, 2]{%
    label           =   {\bfseries Subcase \arabic{Casei}.\arabic*.},
    labelindent=-1em ,labelwidth=1.3cm, labelsep*=1em, leftmargin =!
}

\usepackage{todonotes}

\begin{document}
%%%%%%%%%%%%%%%%%%%%%%%%%%%%%%%%%%%%%%%%%%%%%%%%%%%%%%%
\title{\bf\Large  On the $(k+2,k)$-problem of Brown, Erd\H{o}s and S\'{o}s \\ for even integers $k$ }

\author{%
   Yan Wang\textsuperscript{\textdagger},%
    \quad Jiasheng Zeng\textsuperscript{\S}%
}
\footnotetext[1]{\scriptsize
\noindent\textsuperscript{\textdagger}{School of Mathematical Sciences, Shanghai Jiao Tong University, Shanghai 200240, China. Email: yan.w@sjtu.edu.cn }}
\footnotetext[2]{\scriptsize
\noindent\textsuperscript{\S}{School of Mathematical Sciences, Shanghai Jiao Tong University, Shanghai 200240, China. Email: jasonzeng@sjtu.edu.cn}  }

\date{\today}
%%%%%%%%
\maketitle
%\footnote{footnote}
%%%%%%%%%%%%%%%%%%%%%%%%%%%%%%%%%%%%%%%%%%%%%%%%%
\begin{abstract}
Let $f^{(r)}(n;s,k)$ denote the maximum number of edges in an $r$-graph on $n$ vertices in which every $k$ edges span more than $s$ vertices. Brown, Erd\H{o}s and S\'{o}s in 1973 conjectured that for every $k\geq 2$, the limit $\lim_{n\to\infty} n^{-2} f^{(3)}(n;k+2,k)$ exists and verified the conjecture for $k=2$ by showing that $\lim_{n\to\infty} n^{-2} f^{(3)}(n;4,2)=\frac{1}{6}$. Delcourt and Postle, building on the work of Glock, Joos, Kim, Kühn, Lichev and Pikhurko, proved that for every $k\geq 2$, the limit $\lim_{n\to\infty} n^{-2} f^{(3)}(n;k+2,k)$ exists, thereby solving this conjecture. Their approach was later generalised by Shangguan to every uniformity $r\geq 4$: the limit $\lim_{n\to\infty} n^{-2} f^{(r)}(n; rk-2k+2,k)$ exists for all $r\geq 3$ and $k\geq 2$. However, its exact value was not determined.
%Let $f^{(r)}(n;s,k)$ denote the maximum number of edges in an $r$-graph on $n$ vertices in which every $k$ edges span more than $s$ vertices. Delcourt and Postle, building on the work of Glock, Joos, Kim, Kühn, Lichev and Pikhurko, proved that for every $k\geq 2$, the limit $\lim_{n\to\infty} n^{-2} f^{(3)}(n;k+2,k)$ exists, thereby solving a conjecture of Brown, Erd\H{o}s and S\'{o}s from 1973. Their approach was later generalised by Shangguan to every uniformity $r\geq 4$: the limit $\lim_{n\to\infty} n^{-2} f^{(r)}(n; rk-2k+2,k)$ exists for all $r\geq 3$ and $k\geq 2$, though its exact value was not determined.

When $k\in\{2,3,\ldots,7\}$, the exact values of $\lim_{n\to\infty} n^{-2} f^{(r)}(n; rk-2k+2,k)$  were determined by Glock, Joos, Kim, K\"{u}hn, Lichev, Pikhurko, R\"{o}dl and Sun. Very recently, the limit for $k=8$ and $r\geq 4$ was determined by Pikhurko and Sun. For a general even integer $k$, Letzter and Sgueglia obtained the exact values of $\lim_{n\to\infty} n^{-2} f^{(r)}(n;rk-2k+2,k)$ for every even integer $k$ and uniformity $r\geq 2+\sqrt{2}\,k^{3/2}$. In this paper, we determine the exact value of $\lim_{n\to\infty} n^{-2} f^{(r)}(n;rk-2k+2,k)$ for every even integer $k\geq 4$ and $r\geq 2+\sqrt{\frac{3}{2}k-4}$, and show that it is $\frac{1}{r^2-r}.$
\end{abstract}

%%%%%%%%%%%%%%%%%%%%%%%%%%%%%%%%%%%%%%%%%%%%%%%%%
%\section{\texorpdfstring{$((r-2)k+3,k)$}{}-type, i.e., BESC}
\section{Introduction}

For a given $r \ge 2$, an \emph{$r$-uniform hypergraph} (or \emph{$r$-graph} for short) $H$ consists of a vertex set $V(H)$ and an edge set $E(H) \subseteq \binom{V(H)}{r}$. We denote by $|V(H)|$ and $|H|$ the number of vertices and edges of $H$, respectively. Given an $r$-graph $G$ and a family of $r$-graphs $\mathcal{F}$, we say that $G$ is \emph{$\mathcal{F}$-free} if it contains no member of $\mathcal{F}$ as a subhypergraph. The \emph{Tur\'{a}n number} $\mathrm{ex}_r(n,\mathcal{F})$ is defined as the maximum number of edges in an $\mathcal{F}$-free $r$-graph on $n$ vertices. In this paper, we consider the family $$\mathcal{F}^{(r)}(s,k) := \{ r\text{-graph } H : |H| = k \text{ and } |V(H)| \le s \}. $$
Brown, Erd\H{o}s, and S\'os~\cite{brown1973some-BES-original} were the first to systematically study the function$$ f^{(r)}(n;s,k) := \mathrm{ex}_r(n,\mathcal{F}^{(r)}(s,k)). $$ They showed that \begin{equation}\label{most general fr n s k}
    \Omega(n^{(rk-s)/(k-1)})=f^{(r)}(n;s,k)=O(n^{\lceil(rk-s)/(k-1)\rceil}).
\end{equation}

A central and notoriously difficult case arises when $r=3$ and $s=k+3$. It follows from (\ref{most general fr n s k}) that $$\Omega(n^{\frac{2k-3}{k-1}})=f^{(3)}(n;k+3,k)=O(n^2).$$ In this setting, Brown, Erd\H{o}s, and S\'os (see~\cite{Erdos_Frankl_Rodl_1986}) conjectured that $f^{(3)}(n;k+3,k)=o(n^2),$ which is now known as the $(k+3,k)$-conjecture. Despite extensive efforts, this conjecture remains widely open, having only been resolved for $k=3$ by Ruzsa and Szemer\'edi in their celebrated $(6,3)$-theorem~\cite{Ruzsa_Szemeredi_1978}. This result is of fundamental importance: its proof led to the development of the triangle removal lemma (see~\cite{Conlon_Fox_2013_survey} for a survey), constituted one of the earliest applications of Szemer\'edi's regularity lemma, and implies Roth's theorem~\cite{Roth_1953_3ap} on $3$-term arithmetic progressions.

The difficulty of the $(k+3,k)$-conjecture also motivated major advances in extremal combinatorics, most notably the hypergraph removal lemma due to Gowers~\cite{Gowers_2007}, Nagle, R\"odl and Schacht~\cite{Nagle_Rodl_Schacht_2006}, and R\"odl and Skokan~\cite{Rodl_Skokan_2004_regular,Rodl_Skokan_2006_apply}. Given the current gap in understanding, it is natural to consider approximate versions: what is the smallest integer $d= d(k)$ such that $f^{(3)}(n; k + d,k) = o(n^2)$? In this direction, S\'ark\"ozy and Selkow~\cite{Sarkozy_Selkow_2004_k+d} showed that \begin{equation}\label{k+d on2}
    f^{(3)}(n; k + 2 + \lfloor \log_2 k \rfloor, k)=o(n^2),
\end{equation} and this was later improved by Conlon, Gishboliner, Levanzov, and Shapira~\cite{CONLON20231JCTB} to \begin{equation}f^{(3)}(n; k + O(\tfrac{\log k}{\log\log k}), k)=o(n^2).\end{equation} For small values of $k$, the only progress since~\cite{Sarkozy_Selkow_2004_k+d} was made by Solymosi and Solymosi~\cite{Solymosi_Solymosi_2017_small_k}, who improved the bound $f^{(3)}(n;15,10) = o(n^2) $ following from (\ref{k+d on2}) to $f^{(3)}(n;14,10) = o(n^2).$ Apart from this line of research, Shapira and Tyomkyn~\cite{shapira2021ramsey} also considered a Ramsey-type $(k+3,k)$ problem. Moreover, the $(k+3,k)$-conjecture has been extended to a more general case (see~\cite{Alon_Shapira_2006}), which states that for every $2 \leq \ell < r$ and $k \geq 3$, we have $f^{(r)}(n; (r-\ell)k + \ell + 1, k) = o(n^{\ell})$.

In addition to the $(k+3,k)$-conjecture, the cases when the magnitude of $f^{(r)}(n;s,k)$ has been given by (\ref{most general fr n s k}) have also been extensively studied.
When $t := (rk-s)/(k-1)$ is an integer, it holds that $f^{(r)}(n; (r-t)k+t, k) = \Theta(n^t).$ We are particularly interested in the case when $t=2$, i.e., $s=(r-2)k+2$. In this case, $f^{(r)}(n; (r-2)k+2, k) = \Theta(n^2).$ A natural question is whether the limit $$\pi(r,k) := \lim_{n\to\infty} n^{-2} f^{(r)}(n; (r-2)k+2, k)$$ exists. This was conjectured by Brown, Erd\H{o}s, and S\'os~\cite{brown1973some-BES-original} for $r=3$. In particular, they verified the conjecture for $k=2$ and showed that $\pi(3,2)=\frac{1}{6}.$ Glock~\cite{glock2019triple-pi33-BLMS} later resolved the case when $k=3$ by proving that $\pi(3,3)=\frac{1}{5}$. More recently, Glock, Joos, Kim, K\"{u}hn, Lichev, and Pikhurko~\cite{glock20246-k4} established the case $k=4$, showing that $\pi(3,4)=\frac{7}{36}$. Motivated by these developments, Delcourt and Postle~\cite{delcourt2024limit-PAMS-Resloved} verified the Brown--Erd\H{o}s--S\'os conjecture by proving that the limit $\pi(3,k)$ exists for all $k \ge 2$, though the exact values were not determined.

For general case, Shangguan~\cite{shangguan2023degenerate} proved that $\pi(r,k)$ exists for all $r \ge 4$. A natural problem is to determine the exact value of this limit. For small values of $k$, R\"{o}dl's result on the existence of approximate Steiner systems~\cite{rodl1985packing} implies that $\pi(r,2)=\frac{1}{r^2-r}$ for all $r \ge 3$. Moreover, for $k=3$ and $k=4$, Glock, Joos, Kim, K\"{u}hn, Lichev, and Pikhurko~\cite{glock20246-k4} proved that $$\pi(r,3)=\frac{1}{r^2-r-1} \text{ for }r\geq 3, \quad \pi(r,4)=\frac{1}{r^2-r} \text{ for }r\geq 4, \text{ and } \pi(3,4)=\frac{7}{36}.$$

Recently, Glock, Kim, Lichev, Pikhurko, and Sun~\cite{Glock_Kim_Lichev_Pikhurko_Sun_2025} determined the values of $\pi(r,k)$ for $k \in \{5,6,7\}$. In particular, they showed that $$\pi(r,5)=\pi(r,7)=\frac{1}{r^2-r-1} \quad \text{for every } r \ge 3,$$
and
$$\pi(3,6)=\frac{61}{330} \quad \text{and} \quad \pi(r,6)=\frac{1}{r^2-r} \quad \text{for every } r \ge 4.$$
More recently, Pikhurko and Sun~\cite{pikhurko2025quadratic-8-edges} proved that for every $r \ge 4$, $\pi(r,8)=\frac{1}{r^2-r}$. Using the probabilistic method, they also showed that $\pi(3,8)\geq \frac{3}{16}$ and conjectured that this bound is tight. For even integer $k$, Letzter and Sgueglia~\cite{letzter2025problemSIAM-k1.5} showed that there exists $r_0(k)$ such that \begin{equation}\label{pi r k explicit value}
    \pi(r,k)=\frac{1}{r^2-r}, \quad \text{ for all }r\geq r_0(k).
\end{equation} Specifically, they proved the following result:
\begin{theorem}[\cite{letzter2025problemSIAM-k1.5}]
    For every even integer $k\geq 4$ and integer $r\geq \sqrt{2}k^{\frac{3}{2}}+2$, we have $\pi(r,k)=\frac{1}{r^2-r}$. 
\end{theorem}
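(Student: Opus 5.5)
The plan has two halves: a matching lower bound from a construction, and an upper bound from a charging argument. For the lower bound I would take the construction behind R\"odl's approximate Steiner systems, namely an $r$-graph in which every pair of vertices lies in at most one edge and which has $(1-o(1))\binom{n}{2}/\binom{r}{2}$ edges. I would then delete a negligible set of edges to destroy every configuration of $k$ edges on at most $(r-2)k+2$ vertices. This can be done with the conflict-free hypergraph matching method of Glock--Joos--Kim--K\"uhn--Lichev and Delcourt--Postle. The key input is that, because $k$ is even, the ``dangerous'' configurations all have enough pair-overlaps that their count is $o(n^2)$ after a random greedy or nibble process. This gives $\pi(r,k)\ge \frac{1}{r^2-r}$. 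In fact for even $k$ the lower bound $\frac{1}{r^2-r}$ can also be read off from the general existence results combined with $\pi(r,2)=\frac1{r^2-r}$ and monotonicity-type arguments, so I expect this half to be routine.

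For the upper bound, I would fix an $\mathcal{F}^{(r)}((r-2)k+2,k)$-free $r$-graph $H$ and show that $|H|\le (1+o(1))\frac{n^2}{r^2-r}$. The idea is to count the pairs of vertices covered by edges. If every pair were covered at most once, then $\binom r2|H|\le\binom n2$ and we would be done. The excess comes from pairs covered by two or more edges. So I would study the auxiliary structure in which two edges are adjacent if they share at least two vertices. A connected set of $j$ edges in this structure spans at most $(r-2)j+2$ vertices. Being free of $\mathcal F^{(r)}((r-2)k+2,k)$ therefore forbids such components of size $k$. More generally, by taking disjoint unions of components, it forbids any collection of components whose total ``deficiency'' reaches what $k$ edges would need. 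For even $k$, a union of $k/2$ disjoint ``double edges'' (two edges sharing exactly two vertices) spans $k/2\cdot(2r-2)=(r-2)k+k$ vertices. This suggests tracking, for each component $C$, its vertex deficiency $d(C)=(r-2)|C|+2-|V(C)|$.

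Next I would show that components with positive ``pair-savings'' are rare or bounded in size. I would then do a weighting argument: assign each component $C$ the number of pairs it covers, $\sum_{e\in C}\binom r2-(\text{overcount})$, and compare it with $\binom r2|C|$ minus a gain. The aim is to prove that any component with overlap must also force many pairs inside $V(C)$ that cannot be covered by any other edge. Otherwise those pairs would combine with $C$ and further edges to form a forbidden configuration of $k$ edges. Here the hypothesis on $r$ enters: a component of size $j<k$ spans about $(r-2)j$ vertices and thus $\sim r^2j^2/2$ pairs. Only $\binom r2 j$ of these are used, so there are plenty of ``blocked'' pairs to pay for the overlaps once $r$ is polynomially large in $k$. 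Summing over components then gives $\binom r2|H|\le (1+o(1))\binom n2$, with the $o(1)$ absorbing components of size $\ge k$, which are impossible, and boundary effects.

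The main obstacle is this charging step: proving that every pair-overlap is compensated by at least $\binom r2$ uncoverable pairs, uniformly over all component shapes of size less than $k$. I would first try a greedy extension argument. Given a component $C$ with $|C|=j$, extend it by attaching edges that each meet the current vertex set in exactly two vertices, adding $k-j$ edges in total; any such extension gives $k$ edges on $\le (r-2)k+2$ vertices. I would then count how many pairs in $V(C)$ would enable such an extension. The threshold on $r$ in terms of $k^{3/2}$ arises from balancing the number of blocked pairs, roughly $r^2 j$, against the number of edges that can be charged to them, which grows like $jk$ per extension step.
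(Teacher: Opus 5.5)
\textbf{There is a genuine gap in the upper bound.} The paper does not prove this statement separately. It follows at once from Theorem~\ref{main large r even k sqrtk}, since $\sqrt2\,k^{3/2}\ge\sqrt{\frac32k-4}$. Against that argument, your upper bound fails at exactly the step you flag as the main obstacle, and the mechanism you propose there is false.

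Pairs of $\binom{V(C)}{2}$ not covered by a component $C$ can perfectly well be covered by other edges. Let $C=\{e_1,e_2\}$ be a diamond with $x\in e_1\setminus e_2$ and $y\in e_2\setminus e_1$, and let $e$ meet $e_1\cup e_2$ exactly in $\{x,y\}$. Then $\{e_1,e_2,e\}$ spans $3(r-2)+2$ vertices, which is harmless for $k\ge4$. Moreover, such a pair can be $\overline{1}2$-claimed by many components at once. Up to $\frac k2-1$ diamonds whose vertex sets meet pairwise only in $\{x,y\}$ can all $2$-claim $xy$; only $\frac k2$ of them span $2+\frac k2(2r-4)=(r-2)k+2$ vertices. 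This fact, not your count for vertex-disjoint diamonds, is where evenness of $k$ enters. It forces a \emph{fractional} charge rather than an exclusive one.

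The paper gives each $2$-cluster weight $1$ on pairs it $1$-claims and $\frac{2}{k-2}$ on pairs it $\overline{1}2$-claims. Lemma~\ref{sum can not be k} then shows that every pair receives total weight at most $1$. Already for a lone diamond the required balance is $2\binom r2-1+\frac{2}{k-2}(r-2)^2\ge 2\binom r2$, i.e.\ $(r-2)^2\ge\frac{k-2}{2}$. Your heuristic ``$r^2j$ against $jk$'' does not correspond to any inequality you could actually verify.

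\textbf{Interacting components.} The second missing ingredient is how to handle components that interact. If one component $1$-claims a pair that another $\overline{1}2$-claims, that pair is lost to the second one. The paper therefore merges such components into $2$-clusters and does the accounting there:
\begin{itemize}
\item $|P_1(F)|=\sum_i\bigl(e_i\binom r2-e_i+1\bigr)$;
\item $|P_{\overline{1}2}(F)|\ge 1-m+\sum_i(e_i-1)(r-2)^2$, so each merge destroys at most one blocked pair;
\item for clusters with at least $k+1$ edges, the structure theorem via Property~$\mathbb P$ gives $|F|\ge 2m(F)-k+3$, which bounds the number of merges.
\end{itemize}
All of this also relies on the Delcourt--Postle/Shangguan reduction to $\mathcal G_k^{(r)}$-free graphs (Lemma~\ref{suffice upperbound}), which you never use. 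With only $\mathcal F^{(r)}((r-2)k+2,k)$-freeness, a component need not have exactly $(r-2)|C|+2$ vertices. An outside edge may also meet $V(C)$ in three or more vertices. So your deficiency bookkeeping is uncontrolled.

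\textbf{Lower bound.} Conflict-free hypergraph matchings are the right tool; they underlie Lemma~\ref{lower bound lemma}, which the paper applies with $F$ a single edge. Your justification, however, is wrong. A random-like partial Steiner system contains order $n^2$ configurations of $k$ edges on $(r-2)k+2$ vertices (e.g.\ generalised Pasch configurations for $k=4$). That is comparable to its number of edges, so they cannot be deleted at negligible cost and must be avoided during the construction. Parity of $k$ plays no role there. There is also no monotonicity shortcut: a linear $r$-graph may contain $k$-configurations, and for $r\ge4$ one has $\pi(r,3)>\pi(r,2)=\pi(r,4)$.
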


They also asked for the smallest $r$, denoted as $r_0(k)$, such that the equation (\ref{pi r k explicit value}) holds. Then $r_0(2)=3$ and $r_0(4)=r_0(6)=r_0(8)=4$ according to the previous results. Moreover, Letzter and Sgueglia's result implies that $r_0(k)\leq \sqrt{2}k^{\frac{3}{2}}+2$ for every even integer $k$. In this paper, we show that $r_0(k)$ can be improved to $\sqrt{\frac{3}{2}k-4}+2$ for every even integer $k\geq 4$ as follows.

\begin{theorem}\label{main large r even k sqrtk}
    For every even integer $k\geq 4$ and integer $r\geq \sqrt{\frac{3}{2}k-4}+2$, we have $\pi(r,k)=\frac{1}{r^2-r}$.
\end{theorem}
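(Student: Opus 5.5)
The equality splits into a lower bound and an upper bound. For the lower bound I would use the standard construction from the earlier work, as in \cite{glock20246-k4} and \cite{letzter2025problemSIAM-k1.5}. One takes an approximate Steiner system in which every pair of vertices lies in at most one edge and there are $(1-o(1))\binom{n}{2}/\binom{r}{2}$ edges. The goal is to make it locally sparse, meaning that for every $2\le j\le k$, any $j$ edges span more than $(r-2)j+2$ vertices. I would get such a system from the conflict-free hypergraph matching results of Glock--Joos--Kim--K\"uhn--Lichev and Delcourt--Postle, with the forbidden configurations as conflicts. Hence $\pi(r,k)\ge \frac{1}{r^2-r}$. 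This part holds for all $r\ge 3$ and contains no new difficulty.

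The real content is the upper bound $f^{(r)}(n;(r-2)k+2,k)\le (1+o(1))\frac{n^2}{r^2-r}$. Call a set of $j$ edges spanning at most $(r-2)j+2$ vertices a $j$-configuration; these are the tight configurations. Let $H$ be extremal. Consider the graph $\partial H$ of pairs covered by edges of $H$, and the multiplicity with which each pair is covered. If every pair were covered at most once, we would get $|H|\binom r2\le\binom n2$. The excess comes from pairs in two or more edges. Two edges sharing $\ge 2$ vertices span at most $2r-2$ vertices, so they form a $2$-configuration.

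The strategy is to decompose $H$ into maximal connected tight clusters, as Letzter--Sgueglia do. Because $k$ is even, we can glue $k/2$ disjoint $2$-configurations: this gives $k$ edges spanning at most $k/2\cdot(2r-2)$ vertices. Joining them into a connected tight structure needs only a bounded number of vertex identifications, and each identification saves a vertex. So $H$ can contain only $o(n^2)$ edges in $2$-configurations; otherwise we could find $k/2$ of them chained together. This is the usual removal-lemma / supersaturation argument.

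More carefully, I would assign a weight to each edge: the number of pairs it covers that no other edge covers. Then I would show that the total weight loss is compensated. Overlapping clusters with $j<k$ edges can exist. Such a cluster has $j$ edges and at most $(r-2)j+3$ vertices, possibly more. For each cluster $C$ I would compare $|C|\binom r2$ with the number of distinct pairs it covers. The requirement is that every dense cluster covers at least $|C|\binom r2$ pairs, counted through a discharging scheme in which pairs inside a cluster are charged to its edges.

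I expect the main obstacle to be this local counting inequality. For a cluster with $j\le k-1$ edges on $v\ge (r-2)j+3$ vertices, the pairs it covers must number at least roughly $j\binom r2$, up to terms that can be compensated. Edges that pairwise share at most one vertex cover exactly $j\binom r2$ pairs. Clusters that share pairs lose pairs but gain vertices, and the condition $r\ge\sqrt{\frac32k-4}+2$ should be exactly what makes the trade-off favourable. Roughly, $\binom{r-2}{2}$ has order $(r-2)^2/2$ and must dominate about $\frac34 k$ lost pairs. My first attempt would be an induction on the number of edges in the cluster, adding edges one at a time. Each new edge that meets the existing cluster in $t\ge2$ vertices must be balanced by at least $t-1$ "free" vertices elsewhere, which follows from the $k$-configuration-free condition applied to a cluster of size at most $k$. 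This yields a lower bound on the number of new pairs of the form $\binom{r}{2}-\binom{t}{2}$ plus a correction term, and the quadratic condition on $r$ should make the sum nonnegative. Finally, I would sum over clusters and add the $o(n^2)$ error from the removal step to conclude $|H|\le(1+o(1))\frac{n^2}{r(r-1)}$.
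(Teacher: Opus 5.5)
The lower bound is fine and matches the paper's. The paper applies Lemma~\ref{lower bound lemma} with $F$ a single edge, and that lemma rests on the same conflict-free matching technique you invoke.

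The upper bound has a genuine gap, and its first step is false. It is not true that only $o(n^2)$ edges can lie in $2$-configurations. Take $k/2$ pairwise disjoint diamonds: they span $k(r-1)$ vertices, which is $k-2$ too many. They become a $k$-configuration only after $k-2$ vertex coincidences, for instance when all of them $2$-claim a common pair $xy$ (cf.\ Lemma~\ref{sum can not be k}). Having $\Theta(n^2)$ diamonds forces no such coincidences. For example, the construction behind Lemma~\ref{lower bound lemma}, run with $F$ a diamond, gives $k$-configuration-free graphs of positive density in which every edge lies in a diamond. Removal or supersaturation gives nothing at edge density $O(n^2)$. In fact, if your step held, deleting those edges would leave a linear $r$-graph and give $\pi(r,k)\le\frac{1}{r^2-r}$ for every $r\ge 3$. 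That contradicts $\pi(3,4)=\frac{7}{36}$ and $\pi(3,6)=\frac{61}{330}$, and the hypothesis on $r$ would play no role. The reduction actually available is Lemma~\ref{suffice upperbound}: you may assume $G$ is $\mathcal{G}_k^{(r)}$-free. Then every connected tight cluster with $j\le k-1$ edges has exactly $(r-2)j+2$ vertices, so your clusters with at least $(r-2)j+3$ vertices are not the relevant objects.

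The discharging you then sketch cannot close either, because it credits only pairs covered by edges. Any cluster with two edges meeting in two vertices covers at most $|C|\binom{r}{2}-1$ pairs, for every $r$. The missing idea is to also credit the $(r-2)^2$ \emph{uncovered} cross pairs of each diamond (the $\overline{1}2$-claimed pairs) with weight $\frac{2}{k-2}$, while $1$-claimed pairs get weight $1$. Keeping every pair's total weight at most $1$ requires two facts:
\begin{itemize}
\item After also merging $1$-clusters whenever one $1$-claims a pair that another $2$-claims (the partition $\mathcal{M}_2$), a pair $1$-claimed by one cluster is neither $1$- nor $2$-claimed by any other.
\item At most $\frac{k}{2}-1$ edge-disjoint clusters can $2$-claim a common pair.
\end{itemize}
The second fact is where your $k/2$-diamond observation and the evenness of $k$ genuinely enter --- as a per-pair multiplicity bound, not a global sparsity statement.

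For a $2$-cluster $F$ merged from $m$ $1$-clusters, $w(F)\ge\binom{r}{2}|F|$ then reduces to $(|F|-m)\bigl(2(r-2)^2-(k-2)\bigr)\ge 2(m-1)$. Proving this needs the structural fact that $2$-clusters with at least $k+1$ edges grow only by attaching diamonds, so $|F|\ge 2m-k+3$ (Theorem~\ref{structure of T with T>=k+1}). The tight case is a diamond with $k-3$ single edges through distinct cross pairs, and it is exactly where $(r-2)^2\ge\frac{3}{2}k-4$ comes from. Your heuristic of $\binom{r-2}{2}$ against about $\frac{3}{4}k$ lost pairs points in the right direction, but the proposal contains no mechanism that produces it.
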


\section{Preliminaries}

In this paper, we mainly follow the notation used in~\cite{Glock_Kim_Lichev_Pikhurko_Sun_2025} and~\cite{pikhurko2025quadratic-8-edges}. For a positive integer $m$, we write $[m]$ for the set $\{1,2,\dots,m\}$. For a set $X$, let $\binom{X}{m}$ denote the family of all $m$-subsets of $X$. We often write $xy$ to denote the unordered set $\{x,y\}$.  We identify an $r$-graph $G$ with its edge set. That is, $|G|=|E(G)|$ and $V(G)=\bigcup_{e\in E(G)} e$. For two $r$-graphs $G$ and $H$, we define their union $G\cup H$ by $E(G\cup H)=E(G)\cup E(H)$ and their difference $G\setminus H$ by $E(G\setminus H)=E(G)\setminus E(H)$.

We call an $r$-graph a \emph{diamond} if it has exactly two edges that intersect in exactly two vertices. For positive integers $s$ and $k$, an $(s,k)$-configuration is an element of $\mathcal{F}^{(r)}(s,k)$, that is, an $r$-graph with $k$ edges and at most $s$ vertices. In particular, when $s=(r-2)k+2$ and $s=(r-2)k+1$, we abbreviate these as a $k$-configuration and a $k^{-}$-configuration, respectively. Let $\mathcal{G}_{k}^{r}$ denote the family of all $k$-configurations and all $\mathcal{\ell}^{-}$-configurations with $\ell\in [2,k-1]$, namely, $$\mathcal{G}_{k}^{(r)}=\mathcal{F}^{(r)}((r-2)k+2,k)\cup \left(\bigcup_{\ell=2}^{k-1}\mathcal{F}^{(r)}((r-2)\ell+1,\ell)\right).$$ In the following sections, we will see that $\mathcal{G}_k^{(r)}$ is closely related to $\pi(r,k)$.

Given an $r$-graph $G$, a pair of distinct vertices $xy$ (not necessarily in $\binom{V(G)}{2}$), and a set $A\subseteq \mathbb{N}\cup\{0\}$, we say that $G$ \emph{$A$-claims} the pair $xy$ if for every $i\in A$ there exist $i$ distinct edges $e_1,\dots,e_i$ such that $$ |\{x,y\}\cup e_1\cup\cdots\cup e_i|\le (r-2)i+2. $$ Let $P_A(G)$ denote the set of all vertex pairs $A$-claimed by $G$. Note that $0\in P_A(G)$ always holds, and hence $P_A(G)$ is never empty. In particular, when $A=\{i\}$, we write \emph{$i$-claims} (and respectively $P_i(G)$) instead of $\{i\}$-claims (and respectively $P_{\{i\}}(G)$). Let $C_{G}(xy)$ be the set of nonnegative integers $i$ such that the pair $xy$ is $i$-claimed by $G$, namely, $$C_{G}(xy)=\{i\geq 0: \exists \text{ distince }e_1\cdots,e_i \in E(G)\text{ such that }|\{x,y\}\cup(\cup_{j=1}^ie_j)|\leq (r-2)i+2\}.$$
Moreover, for distinct $A,B\subseteq \mathbb{N}\cup \{0\}$, we say $G$ $\overline{A}B$-claims a pair $xy$ if $A\cap C_{G}(xy)= \emptyset$ and $B\cap C_{G}(xy)\neq \emptyset$. Inparticular, when $A=\{i\}$ and $B=\{j\}$ for $i\neq 
j$, we simply omit the curly brackets. Also, we let $P_{\overline{1}i}(G):=P_i(G)\setminus P_{1}(G)$ be the set of pairs in $\binom{V(G)}{2}$ that are $\overline{1}i$-claimed by $G$.

\section{Proof of Theorem~\ref{main large r even k sqrtk}}
\subsection{Lower bound}

We will need the following result to prove the lower bound for $\pi(r,k)$ in Theorem~\ref{main large r even k sqrtk}.

\begin{lemma}[\cite{glock20246-k4},Theorem 3.1]\label{lower bound lemma} Fix $k\geq 2$ and $r\geq 3$. Let $F$ be a $\mathcal{G}_k^{(r)}$-free $r$-graph. Then $$\liminf_{n\to \infty}\frac{f^{(r)}(n;(r-2)k+2,k)}{n^2}\geq \frac{|F|}{2|P_{\leq\lfloor\frac{k}{2}\rfloor}(F)|},$$ where we define $$P_{\leq t}(F):=\{xy\in \binom{V(F)}{2}:C_{F}\cap [t]\neq \emptyset\}$$ to consist of all pairs $xy$ of $F$ such that $C_{F}(xy)$ contains some $i$ with $1\leq i\leq t$.
    
\end{lemma}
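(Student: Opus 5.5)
This lemma is cited from \cite{glock20246-k4}, and the plan is to reproduce their argument: a random ``blow-up'' construction, namely a conflict-free approximate packing of copies of $F$ into $K_n^{(2)}$. Each copy of $F$ placed on a vertex set $\phi(V(F))\subseteq[n]$ is regarded as occupying the pair set $\phi(P_{\leq\lfloor k/2\rfloor}(F))$, which is a set of $|P_{\leq\lfloor k/2\rfloor}(F)|$ pairs of $[n]$. If we find nearly $\binom n2/|P_{\leq\lfloor k/2\rfloor}(F)|$ copies whose occupied pair sets are pairwise disjoint, the union has about $\frac{n^2}{2}\cdot\frac{|F|}{|P_{\leq\lfloor k/2\rfloor}(F)|}$ edges, which is the claimed bound. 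Note that $P_1(F)\subseteq P_{\leq\lfloor k/2\rfloor}(F)$ contains every pair inside an edge, so disjointness of the occupied sets already makes the copies pairwise diamond-free across copies.

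The construction will use the conflict-free hypergraph matching theorem of Glock, Joos, Kim, K\"uhn and Lichev, or Delcourt--Postle. The auxiliary hypergraph $\mathcal H$ has vertex set $\binom{[n]}{2}$, and its edges are the occupied pair sets of all embeddings of $F$. It is $|P_{\leq\lfloor k/2\rfloor}(F)|$-uniform and essentially regular, with degree $\Theta(n^{v(F)-2})$, and its codegrees are smaller by a factor of $n$. A set of copies is declared a \emph{conflict} if their union contains a member of $\mathcal G_k^{(r)}$ that uses edges from at least two copies. The matching theorem then supplies an almost-perfect matching that avoids all conflicts, provided the conflict system satisfies the required degree bounds. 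To handle the few bad structures that remain, we delete $o(n^2)$ edges.

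The main obstacle is verifying that the matching is indeed $\mathcal G_k^{(r)}$-free. This comes down to a structural argument: suppose a $k$-configuration or an $\ell^-$-configuration $J$ uses edges from several copies, and let $J_i$ be the part of $J$ lying in copy $i$. The key deficiency count compares $v(J)$ with $(r-2)|J|+2$ by summing over the copies. A sub-configuration $J_i$ that spans few vertices relative to its edges must attach to the rest of $J$ through pairs that it claims. If $|J_i|\le\lfloor k/2\rfloor$, these are pairs in $P_{\le\lfloor k/2\rfloor}$, which the matching forces to be disjoint across copies. Otherwise the complementary part $J\setminus J_i$ has at most $\lfloor k/2\rfloor$ edges, and we use it instead. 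Either way we obtain a contradiction with $F$ being $\mathcal G_k^{(r)}$-free, or with the disjointness of claimed pairs.

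The remaining structures are those in which the copies glue together only through single vertices, or ``cyclic'' structures spanning many copies. We handle them as conflicts: each such structure is a bounded-size configuration, so we count its embeddings and show the counts meet the conflict-degree conditions. This counting, together with the halving case analysis above, is where I expect the delicate work to lie.
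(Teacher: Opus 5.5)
\textbf{There is a genuine gap in your halving step.}

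The paper does not reprove this lemma. It quotes it and applies it only with $F$ a single edge, where $P_{\le\lfloor k/2\rfloor}(F)=\binom{V(F)}{2}$; there your scheme reduces to the standard high-girth approximate Steiner system argument and is fine. For general $F$ it is not.

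Disjointness of the occupied sets only forbids a pair from lying in $P_{\le\lfloor k/2\rfloor}$ of \emph{two} copies. It does not exclude a pair $xy$ that one copy $a$-claims with $a\le\lfloor k/2\rfloor$ while another copy claims it only with $k-a>\lfloor k/2\rfloor$ edges. Then the small side occupies $xy$ and the large side claims $xy$ without occupying it. This contradicts neither disjointness nor $\mathcal{G}_k^{(r)}$-freeness of $F$.

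Concretely, let $k=3$ and let $F=\{e_1,e_2\}$ be a diamond; for this $F$ the lemma yields the sharp bound $\frac{1}{r^2-r-1}$. A cross pair $xy$ with $x\in e_1\setminus e_2$ and $y\in e_2\setminus e_1$ has $C_F(xy)=\{0,2\}$, so it is unoccupied. Suppose $xy$ lies in an edge $f$ of a second copy that meets the first only in $\{x,y\}$. Then $\{e_1,e_2,f\}$ spans at most $3(r-2)+2$ vertices, which is a $3$-configuration.

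The two copies are glued along a single pair, so every copy has $\Theta(d)$ such partners, where $d=\Theta(n^{|V(F)|-2})$ is the degree of $\mathcal H$. This is a $2$-element conflict of full degree, which the conflict-free matching theorems cannot accommodate. Deleting $o(n^2)$ edges does not rescue it either:
\begin{enumerate}
\item Attaining the bound forces the occupied sets to cover all but $o(n^2)$ host pairs.
\item So the $\Theta(n^2)$ cross pairs of the chosen copies would all have to lie in $o(n^2)$ unoccupied host pairs.
\item A generic, random-like packing does not achieve this, so it produces $\Theta(n^2)$ forbidden configurations.
\end{enumerate}
The same failure occurs for every $k\ge3$ with $F$ a chain of $k-1$ edges in which consecutive edges share exactly two vertices: its two end vertices form a pair claimed only by all $k-1$ edges. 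So the real danger is two copies glued along a pair. The single-vertex and cyclic gluings you single out are the standard conflicts already present when $F$ is a single edge.

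\textbf{What is missing} is a mechanism that keeps the unoccupied pairs $Q=\binom{V(F)}{2}\setminus P_{\le\lfloor k/2\rfloor}(F)$ of every copy away from the occupied pairs of all other copies. One way is to fix in advance a sparse host graph $\Gamma$ with $o(n^2)$ edges. You then use only embeddings $\phi$ with $\phi(Q)\subseteq\Gamma$ and $\phi(P_{\le\lfloor k/2\rfloor}(F))\cap\Gamma=\emptyset$, so unoccupied pairs are shared by many copies at negligible cost.

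With this restriction, any pair common to two copies is occupied by both or by neither. This is where the threshold $\lfloor k/2\rfloor$ genuinely enters. If a pair is $a$-claimed by one copy and $b$-claimed by another with $a+b=k$, then $\min(a,b)\le\lfloor k/2\rfloor$. So the pair is occupied by one copy, hence by both, contradicting disjointness. The degree and codegree conditions of the conflict system must then be verified for this restricted family of embeddings.
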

\begin{proof}[Proof of the lower bound of Theorem~\ref{main large r even k sqrtk}] The lower bound $\pi(r,k)\geq \frac{1}{r^2-r}$ follows from Lemma~\ref{lower bound lemma} with the $r$-graph $F$ being a single edge.
\end{proof}

\subsection{Upper bound}

We will use the following lemma to establish the upper bound. This lemma was proved by Delcourt and Postle~\cite{delcourt2024limit-PAMS-Resloved} for $r=3$ and by Shangguan~\cite{shangguan2023degenerate} for $r \ge 4$. Consequently, to obtain the upper bound for $\pi(r,k)$, it suffices to consider $\mathcal{G}_k^{(r)}$-free $r$-graphs. 

\begin{lemma}\label{suffice upperbound}
    For all fixed $r\geq 3$ and $k\geq 3$, $$\limsup_{n\to \infty}n^{-2}f^{r}(n;(r-2)k+2,k)\leq \limsup_{n\to\infty}n^{-2}\mathrm{ex}(n,\mathcal{G}_k^{(r)}).$$
\end{lemma}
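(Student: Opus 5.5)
The plan is to show that, up to $o(n^2)$ edges, every extremal $\mathcal{F}^{(r)}((r-2)k+2,k)$-free $r$-graph can be made $\mathcal{G}_k^{(r)}$-free by deleting few edges. Fix $G$ on $n$ vertices with no $k$ edges spanning at most $(r-2)k+2$ vertices. By definition $G$ contains no $k$-configuration, so the only obstructions to being $\mathcal{G}_k^{(r)}$-free are $\ell^-$-configurations with $2\le\ell\le k-1$. It therefore suffices to show that the total number of edges lying in some $\ell^-$-configuration of $G$ is $o(n^2)$ for each fixed $\ell\in[2,k-1]$. Deleting all such edges leaves a $\mathcal{G}_k^{(r)}$-free graph with $|G|-o(n^2)$ edges, which gives the claimed inequality between the $\limsup$'s.

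\textbf{Step 1 (growth/extension).} First I use the $k$-configuration-freeness to show that $\ell^-$-configurations cannot be ``extended''. If $F$ is an $\ell^-$-configuration with $\ell<k$, then adding one edge sharing at least $2$ vertices with $V(F)$, or more generally adding a $j$-configuration that meets $V(F)$ in at least two vertices, raises the deficit. Iterating, one would reach $k$ edges on at most $(r-2)k+2$ vertices, which is forbidden. The bookkeeping is that an $\ell^-$-configuration has $(r-2)\ell+1$ vertices, so it has ``one spare vertex''. Each added edge contributes at most $r-2$ new vertices when it meets the current set in at least $2$ vertices. Hence any sequence of $k-\ell$ edges, each meeting the growing vertex set in at least two vertices, produces a forbidden configuration. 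Consequently, around each $\ell^-$-configuration the edges of $G$ touching $V(F)$ in a pair are very restricted. In particular, few pairs inside $V(F)$ can have codegree $\ge 1$ from outside edges in long chains.

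\textbf{Step 2 (counting via removal).} The cleanest route is the hypergraph removal lemma. The family of $\ell^-$-configurations is a finite family of $r$-graphs on boundedly many vertices. I aim to show that $G$ contains only $o(n^{(r-2)\ell+1})$ copies of each such configuration, or rather few enough that removal applies. Using Step 1, I then bound the number of copies containing a given edge. Alternatively, I would follow the Delcourt–Postle strategy: build the auxiliary ``claim'' structure and show that an $\ell^-$-configuration that survives in many copies would combine, via the pair-claims $P_i$, into a $k$-configuration. The point is that each edge $e$ of an $\ell^-$-configuration $F$ can be replaced by a longer ``path'' of configurations claiming pairs, padding the edge count up to exactly $k$ while keeping the vertex count at most $(r-2)k+2$. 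If positively many ($\Omega(n^2)$) edges lie in $\ell^-$-configurations, a supersaturation/averaging argument yields two vertex-disjoint-enough such configurations sharing a pair. Joining them gives $\ell_1+\ell_2$ edges on $(r-2)(\ell_1+\ell_2)+2-2+\dots$ vertices. I would repeat this until reaching exactly $k$ edges with deficit, possibly after deleting edges; deleting an edge from a configuration with spare vertices keeps it within the bound.

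\textbf{Main obstacle.} The hard step is Step 2: hitting exactly $k$ edges rather than merely at least $k$, while controlling vertex overlaps when gluing configurations. If glued pieces overlap in more than the intended pair, the count only improves, but the number of edges may then be wrong. I would handle this by choosing a minimal counterexample, namely a subconfiguration with the fewest edges among those violating the bound. I would also use that removing an edge whose vertices are mostly covered by the others preserves the ``dense'' property. Finally, Rödl-nibble-type arguments are unnecessary here; the removal lemma suffices to convert ``few copies'' into ``$o(n^2)$ edges to delete''.
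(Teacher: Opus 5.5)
The paper does not prove this lemma. It quotes it from Delcourt--Postle ($r=3$) and Shangguan ($r\ge 4$), so your sketch has to stand on its own, and it does not. Step~1 is correct but purely local. Step~2 carries all the weight, yet it is only a list of possible strategies, and its concrete claims fail:

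\begin{itemize}
\item The hypergraph removal lemma turns ``few copies'' into $o(n^r)$ deletions. That is vacuous here, since $|G|=O(n^2)$ and $r\ge 3$. At this scale you would need graph removal on an auxiliary shadow graph, as in Ruzsa--Szemer\'edi.
\item Averaging does not produce two $\ell^-$-configurations sharing a pair. $\Theta(n^2)$ vertex sets of size $(r-2)\ell+1$ can pairwise meet in at most one vertex, like the blocks of a partial Steiner system.
\item Gluing configurations at single vertices never yields more than one spare vertex. So it does not reach exactly $k$ edges for general residues of $k$ modulo $\ell$.
\item Trimming fails. Deleting an edge lowers the allowed vertex count by $r-2$ but may not lower the actual count at all, so one spare vertex does not survive repeated deletions. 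For example, for $r=3$ the Fano plane minus a line is a $6^-$-configuration. Its subsets $S$ with $|V(S)|\le |S|+2$ all have $|S|\in\{1,4,5,6\}$. Two copies sharing one vertex give $12$ edges on $13$ vertices, yet contain no $9$ edges on at most $11$ vertices.
\end{itemize}

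There is a more serious problem. Your intermediate claim is that every $k$-configuration-free $G$ can be cleared of all $\ell^-$-configurations by deleting $o(n^2)$ edges. This is much stronger than the lemma and looks out of reach. To see why, place a copy of the Fano plane minus a line on every block of a linear $7$-graph $\mathcal{B}$.

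\begin{itemize}
\item Take $9$ of the resulting triples, spread over $b$ blocks. At most one block is used in full.
\item Counting shows that these triples span at most $11$ vertices only if those $b$ blocks span at most $5b+3$ points.
\item The cases $b\le 3$ are excluded by the list $\{1,4,5,6\}$ above.
\end{itemize}

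So if $\mathcal{B}$ has no $b$ blocks on at most $5b+3$ points for $4\le b\le 9$, the triple system is $(11,9)$-free. Yet every edge lies in one of $|\mathcal{B}|$ edge-disjoint $6^-$-configurations.

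Your claim for $(r,k)=(3,9)$ would therefore force every linear $7$-graph with $\Omega(n^2)$ edges to contain $b$ edges on at most $(7-2)b+3$ vertices, for some $4\le b\le 9$. That is a statement of Brown--Erd\H{o}s--S\'os $(k+3,k)$ type. It goes beyond what removal-lemma gluing gives: S\'ark\"ozy--Selkow only reach $5b+2+\lfloor\log_2 b\rfloor$ points.

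The lemma compares only extremal numbers. The natural way around the obstacle is to use that freedom: compare $G$ with a different $\mathcal{G}_k^{(r)}$-free graph of essentially the same density, rather than trying to clean $G$ itself. One possibility is a suitably generic blow-up of $G$, in which configurations with a spare vertex carry a vanishing fraction of the edges; you would then still have to check that no new $k$-configurations arise. This is the idea your proposal is missing.
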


Our proof is inspired by~\cite{glock20246-k4,Glock_Kim_Lichev_Pikhurko_Sun_2025,pikhurko2025quadratic-8-edges}. We now give an outline of the proof. Assume that $G$ is an $n$-vertex $\mathcal{G}_k^{(r)}$-free $r$-graph. We begin with the trivial partition of $E(G)$ into single edges and repeatedly apply certain merging rules to combine parts into larger clusters. The $\mathcal{G}_k^{(r)}$-freeness restricts the possible structures of the resulting clusters. For each final cluster (which forms a subgraph of $G$), we assign weights to certain vertex pairs inside the cluster. This assignment is designed so that every pair of vertices in $V(G)$ receives total weight at most $1$. By comparing the number of edges in a cluster with the total weight assigned within it, we obtain an upper bound on $|G|$ by bounding the ratio between the number of edges and the total weight for each possible cluster.

%\subsection{Merging and analysing}
\subsubsection{Mergeability}

Let $G$ be an $r$-graph and $\mathcal{P}$ be a partition of its edge set $E(G)$. We view each element of $\mathcal{P}$ as a subhypergraph of $G$. Let $A,B\subseteq \mathbb{N}\cup \{0\}$ (not necessarily different). For edge-disjoint $F,H\subseteq G$ and a pair $xy$, we say $F$ and $G$ is $(A|B)$-mergeable (via $xy$) if $A\subseteq C_{F}(xy)$ and $B\subseteq C_{H}(xy)$. We shortly write $A|B$-mergeable to denote $(A|B)$-mergeable or $(B|A)$-mergeable. An $A|B$-merging of $\mathcal{P}$, denoted by $\mathcal{M}_{A|B}(\mathcal{P})$, is a partition of $E(G)$ obtaining from $\mathcal{P}$ by iteratively merging a pair of distinct $A|B$-mergeable parts and replacing them by their union as much as possible. A partial $A|B$-cluster $F$ is a subgraph of $G$ which can appear as a part in some intermediate stage of the $A|B$-merging process starting with $\mathcal{P}$. Let $\mathcal{M}'_{A|B}(\mathcal{P})$ denote the set of all partial $A|B$-clusters. In particular, when $A=\{1\}$ and $B=\{j\}$, we write $(j)$ and $j$ instead of $(\{1\}|\{j\})$ and $\{1\}|\{j\}$ for short.

Given an $r$-graph $G$, let $$\mathcal{M}_1=\mathcal{M}_{1|1}(\mathcal{P}_{trivial})$$ be the $1$-merging of the trivial partition $\mathcal{P}_{trivial}$ of $G$ into single edges. Note that the elements of $\mathcal{M}_1$ admit the following equivalent description. We call $F \subseteq G$ \emph{connected} if for any $X,Y \in F$ there exists a sequence of edges $X_1=X, X_2, \ldots, X_m=Y$ in $F$ such that $|X_i \cap X_{i+1}| \ge 2$ for every $i \in [m-1]$. Thus, the $1$-clusters are precisely the maximal connected subgraphs of $G$. We will need the following. 

%\begin{lemma}\label{lemma joint remain k-con}
%Let $k \geq 4$, and let $G$ be an $r$-uniform hypergraph that is $\mathcal{G}_k^{(r)}$-free. Suppose $A, B \subseteq E(G)$ satisfy $|A| = a$, $|B| = b$, and $A \cap B = \emptyset$. Assume that $|V(A)| \leq (r-2)a + 2$ and $|V(B)| \leq (r-2)b + 2$. If, for some $2 \leq t \leq k-1$, the sets $A$ and $B$ are $1|t$-mergeable, then $$ |V(A) \cup V(B)| \leq (r-2)(a+b) + 2,$$and equality can occur only when $|V(A) \cap V(B)| = 2$.    
%\end{lemma}

\begin{lemma}[Trimming Lemma~\cite{Glock_Kim_Lichev_Pikhurko_Sun_2025}]\label{trimming lemma} Fix an $r$-graph $G$, a partition $\mathcal{P}$ of $E(G)$ and sets $A,B\subseteq \mathbb{N}$. Suppose that, for all $(A|B)$-mergeable (and thus edge-disjoint) $F,H\in \mathcal{M}'_{A|B}(\mathcal{P})$, there exist $(A|B)$-mergeable $F',H'\in \mathcal{P}$ such that $F'\subseteq F$ and $H'\subseteq H$. 

Then for every partial $A|B$-clusters $F_0\subseteq F$, there is an ordering $F_1,\cdots,F_s$ of the elements of $\mathcal{P}$ that lie inside $F\setminus F_0$ such that, for every $i\in [s]$, $\bigcup_{j=0}^{i-1}F_j$ and $F_i$ are $A|B$-mergeable (and, in particular, $\bigcup_{j=0}^{i-1}F_j$ is a partial $A|B$-cluster for every $i\in [s]$).
\end{lemma}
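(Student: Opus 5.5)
The plan is to build the ordering greedily. Throughout, $U$ denotes the current union $\bigcup_{j=0}^{i-1}F_j$; initially $U=F_0$, and I assume $F_0\neq\emptyset$, as it is a part of a partition. The claim to prove is: if $U\subsetneq F$ is a partial $A|B$-cluster that is a union of elements of $\mathcal{P}$, then some element $H'\in\mathcal{P}$ with $H'\subseteq F\setminus U$ is $A|B$-mergeable with $U$. Taking $F_i:=H'$ and repeating then produces the required ordering after finitely many steps, since $F\setminus U$ loses at least one part each time.

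I also need that $U\cup H'$ is again a partial $A|B$-cluster. Take a merging process that produces $U$ as a part. Continue it by merging $U$ with $H'$, which is still an untouched singleton part because it is disjoint from $U$. This is a legal merging step, so $U\cup H'$ appears in a merging process and is a partial $A|B$-cluster. Every partial cluster is a union of parts of $\mathcal{P}$, since merging only takes unions.

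To find $H'$, I would use the merging history of $F$. Since $F$ is a partial cluster, its formation can be recorded as a binary tree $T$. The leaves of $T$ are elements of $\mathcal{P}$, each internal node is the union of its two children, and the two children of every internal node are $A|B$-mergeable partial clusters in $\mathcal{M}'_{A|B}(\mathcal{P})$. Every node is a union of parts, so a leaf is either contained in $U$ or disjoint from $U$. The root $F$ meets $U$, because $\emptyset\neq F_0\subseteq U\subseteq F$, and is not contained in $U$.

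Now choose a node $N$ of $T$ that is minimal, with respect to inclusion along the tree, among nodes that meet $U$ but are not contained in $U$. Such an $N$ exists because the root qualifies, and $N$ is not a leaf by the dichotomy above. Let $N_1,N_2$ be its children. By minimality, each child is either contained in $U$ or disjoint from it. They cannot both be contained in $U$, since $N\not\subseteq U$, and they cannot both be disjoint from $U$, since $N$ meets $U$. So, after relabelling, $N_1\subseteq U$ and $N_2\cap U=\emptyset$.

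Since $N_1$ and $N_2$ are mergeable partial clusters, the hypothesis of the lemma gives parts $F',H'\in\mathcal{P}$ with $F'\subseteq N_1$, $H'\subseteq N_2$, and $F',H'$ mergeable via some pair $xy$ in the same orientation. The set $C_{\cdot}(xy)$ is monotone under taking subgraphs, because the edges witnessing $i\in C_{F'}(xy)$ are also edges of $U$. Hence $C_{F'}(xy)\subseteq C_U(xy)$, so $U$ and $H'$ are $A|B$-mergeable via $xy$. Finally $H'\subseteq N_2\subseteq F\setminus U$, so $H'$ is a new part inside $F\setminus F_0$, which completes the inductive step.

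The only delicate point is that $U$ need not be a node of $F$'s merging tree, so the two merging histories are not aligned. The minimal-node argument is what handles this, and it is the step I expect to be the main obstacle.
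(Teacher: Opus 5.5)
Your proof is correct and is essentially the standard argument for this lemma in \cite{Glock_Kim_Lichev_Pikhurko_Sun_2025}; the present paper only cites the lemma and gives no proof. That argument grows $U$ greedily and finds in $F$'s merging history the first merge joining a part inside $U$ with a part disjoint from $U$ (your minimal node $N$). It then applies the hypothesis to these two children and passes mergeability from $F'\subseteq U$ to $U$ by monotonicity of $C_{\cdot}(xy)$.

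One small slip: in an arbitrary merging process producing $U$, the part $H'$ need not still be a singleton just because it is disjoint from $U$, since it may already have been merged with parts outside $U$. The fix is to first discard all merges not taking place inside $U$, which is legal because mergeability depends only on the two parts involved. This affects only the parenthetical ``partial cluster'' clause, not the ordering itself.
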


It is easy to see that the assumption of Lemma~\ref{trimming lemma} is satisfied when $A=B=\{1\}$. We will frequently use the case when $A=B=\{1\}$, and therefore state it separately.

\begin{cor}\label{connect trimming lemma}
    For every pair $F_0\subseteq F$ of connected $r$-graphs, there is an ordering $X_1,\cdots,X_s$ of the edges in $F\setminus F_0$ such that for every $i\in [s]$, the $r$-graph $F_0\cup \{X_1,\cdots,X_i\}$ is connected.
\end{cor}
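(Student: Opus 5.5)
The plan is to derive Corollary~\ref{connect trimming lemma} directly from Lemma~\ref{trimming lemma}, applied to the trivial partition $\mathcal{P}=\mathcal{P}_{trivial}$ of $E(F)$ into single edges with $A=B=\{1\}$. We run the lemma with the ambient $r$-graph taken to be $F$ itself.

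\textbf{Step 1: translate mergeability.} For two edge-disjoint subgraphs $H_1,H_2$, a pair $xy$ is $1$-claimed by $H_1$ exactly when some edge $e\in H_1$ satisfies $|\{x,y\}\cup e|\le r$, i.e.\ $\{x,y\}\subseteq e$. Thus $H_1,H_2$ are $(1|1)$-mergeable if and only if there are edges $e_1\in H_1$ and $e_2\in H_2$ with $|e_1\cap e_2|\ge 2$. Consequently, the partial $1|1$-clusters starting from $\mathcal{P}_{trivial}$ are exactly the connected subgraphs. Merging along a pair of edges sharing two vertices preserves connectedness. Conversely, any connected subgraph can be built edge by edge along a spanning tree of its ``share $\ge2$ vertices'' auxiliary graph.

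\textbf{Step 2: verify the hypothesis of Lemma~\ref{trimming lemma}.} Suppose $H_1,H_2\in\mathcal{M}'_{1|1}(\mathcal{P}_{trivial})$ are $(1|1)$-mergeable via $xy$. Pick $e_1\in H_1$ and $e_2\in H_2$ both containing $x,y$. Then the single-edge parts $\{e_1\},\{e_2\}\in\mathcal{P}_{trivial}$ are $(1|1)$-mergeable via the same $xy$, and $\{e_i\}\subseteq H_i$, as required.

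\textbf{Step 3: apply the lemma.} By Step 1, $F_0\subseteq F$ are both partial $1|1$-clusters, since both are connected. Lemma~\ref{trimming lemma} then gives an ordering $X_1,\dots,X_s$ of the edges of $F\setminus F_0$ such that $F_0\cup\{X_1,\dots,X_{i-1}\}$ and $\{X_i\}$ are mergeable for each $i$. Hence $X_i$ shares at least two vertices with an edge of $F_0\cup\{X_1,\dots,X_{i-1}\}$, and connectedness follows by induction on $i$.

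There is a degenerate case: if $F_0$ is empty, it is not a partial cluster. In that case, start instead from a single edge $X_1$ and apply the argument with $F_0=\{X_1\}$.

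\textbf{Direct alternative.} A self-contained argument avoids the lemma entirely. Suppose some edge of $F\setminus F_0$ remains unused, and let $S=F_0\cup\{X_1,\dots,X_{i-1}\}$ be the current set. Take $X\in S$ and $Y\in F\setminus S$, together with a path $X=Z_1,\dots,Z_m=Y$ in $F$ whose consecutive edges share at least two vertices. The first $Z_j\notin S$ shares two vertices with $Z_{j-1}\in S$, so we may set $X_i=Z_j$. This greedy step is the only real content of the proof; the rest is bookkeeping.
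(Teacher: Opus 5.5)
Your proposal is correct and follows the paper's route: the paper simply notes that the hypothesis of Lemma~\ref{trimming lemma} holds for $A=B=\{1\}$ with the trivial partition (your Step 2), with partial $1|1$-clusters being exactly the connected subgraphs, and reads off the corollary. Your greedy ``direct alternative'' is also a valid self-contained proof, and your treatment of an empty $F_0$ fills in a small detail the paper leaves implicit.
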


%Moreover, if $G$ is $\mathcal{G}_k^{(r)}$-free, then we have the following.

%\begin{lemma}\label{number vts edge M1}
%    With the above notation, if $G$ is $\mathcal{G}_{k}^{(r)}$-free, then every $F\in \mathcal{M}_1$ satisfies that $|F|=i\leq k-1$ and \begin{equation}\label{value M1 P1 and P12}
%        |P_1(F)|=i\binom{r}{2}-i+1 \text{ and }|P_{\overline{1}2}(F)|\geq (i-1)(r-2)^2.
%    \end{equation}
%\end{lemma}

%\begin{proof}
%    TODO
%\end{proof}

\subsubsection{Structural results for \texorpdfstring{$\mathcal{M}_2$}{}}

In the sequel, we always assume that $G$ is an $r$-uniform hypergraph on $n$ vertices that is $\mathcal{G}_k^{(r)}$-free. We consider the partition$$\mathcal{M}_2 := \mathcal{M}_{1|2}(\mathcal{M}_1),$$ which is obtained from $1$-clusters $\mathcal{M}_1$ by iteratively merging $(2)$-mergeable pairs. Let $\mathcal{M}'_2:=\mathcal{M}'_{1|2}(\mathcal{M}_1)$ be the set of all partial $1|2$-clusters. It is easy to see that for any $F \in \mathcal{M}'_2$, we have $|F| \neq k$; otherwise, $F$ would be a $k$-configuration. For $F\in \mathcal{M}'_2$ which is merged from $1$-clusters $F_1,\cdots,F_m$ in this order as in Lemma~\ref{trimming lemma}, the sequence of sizes $(|F_1|,\cdots,|F_m|)$ is called a \textit{composition} of $F$. Note that when $F \in \mathcal{M}'_2$, the value of $m$ is independent of the order of the merging process. Therefore, we call it the \emph{merging number} of $F$ and denote it by $m(F)$. Given $F\in \mathcal{M}'_2$, we say that a diamond $D\subseteq F$ is \textit{flexible} (in $F$) if for each $e\in D$, there is exactly $1$ vertex in $V(e)\cap V(F\setminus D)$. Therefore, if $D\subseteq F$ is a flexible diamond, then $|V(D)\cap V(F\setminus D)| = 2$.

The following statement is a consequence of Lemma~\ref{trimming lemma}.

\begin{lemma}\label{cor 1 of trimming lemma}
%Fix an $r$-graph $G$ and let $\mathcal{P}_{\mathrm{trivial}}$ be the trivial partition of $E(G)$. Let $\mathcal{M}_1=\mathcal{M}_{1|1}(\mathcal{P}_{\mathrm{trivial}}), \mathcal{M}_2=\mathcal{M}_{1|2}(\mathcal{M}_1).$
For any $A_0 \in \mathcal{M}'_{1|2}(\mathcal{M}_1)$, suppose that a diamond $D \subseteq A_0$ is flexible. Let $T \in \mathcal{M}_1$ such that $T$ and $A_0$ are $2|1$-mergeable via $f \in T$ and $\{e_1,e_2\} \subseteq A_0$. If $\{e_1,e_2\} \cap D=\emptyset$, then there exists an ordering $T=T_0,T_1,\ldots,T_s$ such that for every $i \in [s]$, the set $\bigcup_{j=0}^{i-1} T_j$ and $T_i$ are $1|2$-mergeable, and $\bigcup_{i=0}^{s} T_i = A_0 \cup T \setminus D.$
\end{lemma}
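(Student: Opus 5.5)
The idea is to apply the Trimming Lemma (Lemma~\ref{trimming lemma}) with $\mathcal{P}=\mathcal{M}_1$, $A=\{1\}$, $B=\{2\}$. The partial cluster $F_0$ will be $T$, and the big cluster will be $F:=A_0\cup T\setminus D$. Three things must be checked:
\begin{enumerate}[(a)]
\item the hypothesis of Lemma~\ref{trimming lemma} holds for the $1|2$-merging of $\mathcal{M}_1$;
\item $T$ and $A_0\cup T\setminus D$ are partial $1|2$-clusters with $T\subseteq A_0\cup T\setminus D$;
\item the elements of $\mathcal{M}_1$ lying in $(A_0\cup T\setminus D)\setminus T=A_0\setminus D$ are exactly the $1$-clusters of $A_0$ other than $D$.
\end{enumerate}
Lemma~\ref{trimming lemma} then supplies the ordering $T_1,\dots,T_s$ of these $1$-clusters with the required mergeability, and their union together with $T_0=T$ is $A_0\cup T\setminus D$.

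First I observe that a flexible diamond $D$ is itself a $1$-cluster in $A_0$. Its two edges share two vertices, so $D$ is connected. Each edge of $D$ meets $V(A_0\setminus D)$ in only one vertex, so no edge outside $D$ shares two vertices with an edge of $D$. Hence $D\in\mathcal{M}_1$, and $A_0\setminus D$ is a union of elements of $\mathcal{M}_1$. This gives (c). Also $T\in\mathcal{M}_1$ and $T\not\subseteq A_0$: if $T$ lay in $A_0$, it would be a $1$-cluster of $A_0$, and I will assume the intended setting in which $T$ is a new cluster being merged.

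For (a): suppose $F,H\in\mathcal{M}'_2$ are $(1|2)$-mergeable via $xy$, using an edge $X\in F$ and edges $Y_1,Y_2\in H$ with $|\{x,y\}\cup Y_1\cup Y_2|\le 2r-2$. Take $F'$ to be the $1$-cluster containing $X$. Then $1\in C_{F'}(xy)$ automatically, since a $1$-claim uses a single edge. If $Y_1,Y_2$ lie in one $1$-cluster $H'$, we are done. Otherwise I must argue, using $\mathcal{G}_k^{(r)}$-freeness, that this situation is impossible or can be replaced. The pair $Y_1,Y_2$ spans at most $2r-2$ vertices together with $xy$. If they lie in different $1$-clusters, then $|Y_1\cap Y_2|\le 1$, so $x,y$ each contribute a shared vertex. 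Combining $X$ with $Y_1,Y_2$ and the connecting structure inside $H$ via Corollary~\ref{connect trimming lemma} should then produce an $\ell^-$-configuration with $\ell\le k-1$, or a $k$-configuration, contradicting freeness (the sizes are controlled because $|F|,|H|\ne k$ and partial clusters have at most $k-1$ edges). This verification, essentially that the trimming hypothesis holds for $\mathcal{M}_2$, is what I expect to be the main obstacle. I would handle it by the standard vertex count: each merge adds $(r-2)|T_i|$ new vertices at most, so every partial $1|2$-cluster $F$ has $|V(F)|\le (r-2)|F|+2$.

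For (b): $T\in\mathcal{M}_1$ is trivially a partial cluster. For $A_0\cup T\setminus D$, $A_0\cup T$ is a partial cluster, since $T$ and $A_0$ are $2|1$-mergeable. By the hypothesis $\{e_1,e_2\}\cap D=\emptyset$, the edges witnessing this mergeability survive in $A_0\setminus D$. So I apply Lemma~\ref{trimming lemma} to the partial cluster $A_0\cup T$ with $F_0=T$, obtaining an ordering of all $1$-clusters of $A_0$. I then delete $D$ from this ordering and must check that mergeability is preserved at each later step. If some later $T_i$ was mergeable with the prefix only through $D$, I would re-route: $D$ touches the rest only in two vertices $u,v$, one on each edge. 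Any $2$-claim or $1$-claim through $D$ of a pair $xy$ then forces $\{x,y\}\cap\{u,v\}$ structure, and that can be replaced by a claim not using $D$, or else yields a small configuration contradicting freeness. As an alternative, which avoids this repair step, I would apply Lemma~\ref{trimming lemma} directly to $F=A_0\cup T\setminus D$. For that I need it to be a partial cluster, shown by induction along the merge order of $A_0$ with $D$ skipped, using the flexibility of $D$ exactly as above.
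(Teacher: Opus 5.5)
Your plan is the paper's own: it states this lemma as an immediate consequence of Lemma~\ref{trimming lemma}, with $\mathcal{P}=\mathcal{M}_1$ and $F_0=T$. You also correctly locate the crux, namely that $A_0\cup T\setminus D$ must be a partial $1|2$-cluster. But that is exactly the step you do not prove.

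Your first route (take a trimming order of $A_0\cup T$, delete $D$, and patch locally) is unjustified. Flexibility is measured inside $A_0$, not inside $A_0\cup T$. So nothing you say prevents the order from being $T,D,T^*,\dots$, with $D$ merged onto $T$ and $T^*$ merged only through $D$. At that point no claim avoiding $D$ need be available, and you give no reason why a small configuration would appear. The actual fix is to reorder, not to substitute a claim. Your second route ("skip $D$ in a merge order of $A_0$, using flexibility as above") refers back to the same unspecified re-routing.

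Conversely, item (a), which you call the main obstacle, is immediate and needs no freeness. If $Y_1,Y_2$ lie in different $1$-clusters, then $|Y_1\cap Y_2|\le 1$, so $|Y_1\cup Y_2|\ge 2r-1$, and they $2$-claim no pair at all. Your sentence that "$x,y$ each contribute a shared vertex" has no content. What (a) really gives you is this: a union of $1$-clusters is a partial cluster if and only if the graph on its $1$-clusters, joining two when they are $1|2$-mergeable, is connected.

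The missing idea is that $D$ is a leaf of this graph on the $1$-clusters of $A_0$. Write $D=\{e,e'\}$, and let $u$ and $v$ be the unique vertices of $e$ and $e'$ lying in $V(A_0\setminus D)$.

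\begin{enumerate}[(i)]
\item Because $G$ has no $2^-$-configuration, every $2$-claim is made by a diamond, and both vertices of the claimed pair lie in that diamond. Hence any pair through which $D$ merges with a $1$-cluster $T'\subseteq A_0\setminus D$ lies in $V(D)\cap V(T')\subseteq\{u,v\}$.
\item This forces $u\neq v$. Otherwise $D$ would have no neighbour in $A_0$, although $A_0$ also contains the $1$-cluster of $e_1,e_2$.
\item It follows that $u\notin e'$ and $v\notin e$. So $D$ does not $1$-claim $uv$; it $\overline{1}2$-claims it, and $T'$ must contain an edge through both $u$ and $v$.
\item Any two such edges share two vertices, so they all lie in a single $1$-cluster $T^*$. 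Thus $D$ has exactly one neighbour.
\end{enumerate}

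Deleting a leaf preserves connectivity, so $A_0\setminus D$ is still a partial cluster. The cluster $T$ is adjacent to the $1$-cluster containing $e_1,e_2$, which is not $D$ because $e_1\notin D$. Hence the $1$-clusters of $A_0\cup T\setminus D$ form a connected graph. A breadth-first order from $T$ then yields $T_0,\dots,T_s$; equivalently, apply Lemma~\ref{trimming lemma} with $F_0=T$ and $F=A_0\cup T\setminus D$, which is now legitimate.
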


\begin{figure}[htbp]
\centering
\includegraphics[width=0.2\textwidth]{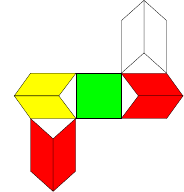}
\caption{This is an example of a $4$-uniform hypergraph with Property $\mathbb{P}$ in the case when $k=6$ and $r=4$. Each quadrilateral represents a hyperedge. If the green edges are taken as $T$, then the two red diamonds can serve as two flexible diamonds $T_1$ and $T_{\ell}$ in the definition.}
\label{fig:hypergraph}
\end{figure}

Let $F \in \mathcal{M}'_2$. We say that $F$ satisfies \emph{Property $\mathbb{P}$} (see Figure~\ref{fig:hypergraph} as an example) if for every $T \in F$ there exist a positive integer $\ell$ and $\ell$ $1$-clusters $T_1,T_2,\ldots,T_\ell$ merging in this order such that
\begin{equation}\label{Property P}
H=\bigcup_{p=1}^{\ell} T_p \subseteq F, 
\qquad 
\left|\bigcup_{p=1}^{\ell} T_p\right| = k+1,
\qquad 
T \in \{T_1,T_2,\ldots,T_\ell\}
\end{equation}
and $T_1$ and $T_\ell$ are two flexible diamonds in $H$ that $\overline{1}2$-claim a pair $1$-claimed by $T_2$ and $T_{\ell-1}$, respectively. Therefore, 
\begin{equation}\label{Property P 2}
    |V(T_1)\cap V(\cup_{i=2}^{\ell}T_p)|=2, \qquad |V(T_\ell) \cap V(\cup_{i=1}^{\ell-1}T_{p})|=2.
\end{equation}

\begin{theorem}\label{we can add 2 edges each time}
Suppose $F \in \mathcal{M}'_2$ satisfies Property $\mathbb{P}$. Then for any $S \in \mathcal{M}_1$ such that $F \cup S \in \mathcal{M}'_2$, we have $|S|=2$, and $F \cup S$ also satisfies Property $\mathbb{P}$.
\end{theorem}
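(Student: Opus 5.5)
The plan is to exploit $\mathcal{G}_k^{(r)}$-freeness together with the two flexible diamonds supplied by Property $\mathbb{P}$. The key structural input is a counting fact for any $H \in \mathcal{M}'_2$ with composition $(|F_1|,\dots,|F_m|)$: each $1$-cluster on $i$ edges spans at most $(r-2)i+2$ vertices, and each $1|2$-merging step adds at most $(r-2)|F_j|$ new vertices. Hence $|V(H)| \le (r-2)|H|+2$. Since $G$ is $\mathcal{G}_k^{(r)}$-free, every subconfiguration of $F$ with $\ell<k$ edges spans at least $(r-2)\ell+2$ vertices, and $|H|\neq k$.

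Fix $S \in \mathcal{M}_1$ with $F\cup S\in\mathcal{M}'_2$. By Lemma~\ref{trimming lemma}, $S$ and $F$ are $1|2$-mergeable via some pair $xy$, with one of two orientations: $S$ $1$-claims $xy$ and $F$ $2$-claims it, or $S$ $2$-claims $xy$ and $F$ $1$-claims it.

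\textbf{Step 1: $|S|=2$.} Choose any edge $T$ of $F$ involved in the merging, i.e.\ $T$ lies in the pair $\{e_1,e_2\}$ or is the edge $1$-claiming $xy$. Property $\mathbb{P}$ gives $H=\bigcup_{p=1}^{\ell}T_p$ with $|H|=k+1$ containing $T$, and flexible diamonds $T_1$, $T_\ell$.
\begin{itemize}
\item If the merging edges avoid $T_1$, apply Lemma~\ref{cor 1 of trimming lemma} with $A_0=H$ and $D=T_1$. This reorders $(H\cup S)\setminus T_1$ as a partial $1|2$-cluster with $k+1-2+|S|=k-1+|S|$ edges.
\item Otherwise the merging edges meet $T_1$ and so avoid $T_\ell$, since the diamonds are far apart; then remove $T_\ell$ instead.
\end{itemize}
Removing further flexible-diamond-like blocks, or trimming from the end of the ordering, lets us realise partial clusters of sizes $k-1+|S|-2j$. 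If $|S|$ is odd, or $|S|\ge 3$, one of these sizes equals $k$ (using that $k$ is even) or yields an $\ell^-$-configuration. Either outcome is a contradiction.

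The case $|S|=1$ needs separate care: a single edge merging via a $\overline{1}2$-claimed pair of $F$. Here remove one diamond from $H\cup S$, obtaining $k$ edges on at most $(r-2)k+2$ vertices, which is forbidden.

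For $|S|\ge3$, first use Corollary~\ref{connect trimming lemma} to shrink $S$ to a connected sub-cluster of size $|S|-1$ or $|S|-2$ that still $1$- or $2$-claims $xy$. This reduces to a smaller odd or even count hitting exactly $k$ edges. I expect this to be the main obstacle: making the parity bookkeeping work uniformly requires that the $2$-claim of $xy$ survive after deleting a diamond.

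\textbf{Step 2: Property $\mathbb{P}$ for $F\cup S$.} For $T\in F$, keep the witness $H$. For $T=S$, build a new witness: take the $H$ from Step 1 containing the merging edge, and replace the diamond $D\in\{T_1,T_\ell\}$ not touched by the merge with $S$, using the reordering of Lemma~\ref{cor 1 of trimming lemma}. The result $H'=(H\setminus D)\cup S$ has $k+1$ edges. Now $S$ is a diamond, and we must check that it is flexible in $H'$, meeting $V(H'\setminus S)$ in exactly two vertices, one per edge, and that it $\overline{1}2$-claims a pair $1$-claimed by its neighbour cluster.

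Both facts come from vertex counting. If $S$ shared more vertices with $H'\setminus S$, then $H'$ minus the other end diamond would have $k-1$ edges on at most $(r-2)(k-1)+1$ vertices, a $(k-1)^-$-configuration. Similarly, $S$ merging via a $1$-claimed pair of its own would contradict $S$ being a separate $1$-cluster, so the claim is $\overline{1}2$. The ordering puts $S$ last and $T_\ell$ (or $T_1$) first, as required by (\ref{Property P}).
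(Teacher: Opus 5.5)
\textbf{Gap 1: the case $|S|\ge 3$ in Step~1.} Your overall frame follows the paper: take the Property~$\mathbb{P}$ witness $H$ for the cluster used in the merge, delete an end diamond not used by the merge, apply Lemma~\ref{cor 1 of trimming lemma}, and count vertices. Your $|S|=1$ argument is the paper's first step, but it should be run for \emph{every} $|S|$. If $S$ $1$-claims $xy$ through an edge $e$ while $F$ $2$-claims it through $f_1,f_2$ avoiding $T_1$, then $(H\setminus T_1)\cup\{e\}$ is a $k$-configuration. This is what excludes that orientation when $|S|=2$. Your Step~2 remark about $S$ being a separate $1$-cluster only excludes $S$ $1$-claiming a pair that $F$ also $1$-claims.

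The real gap is $|S|\ge 3$ with $S$ $2$-claiming $xy$ through a diamond $D_1\subseteq S$ and $F$ $1$-claiming it through an edge $f$. Your sizes $k-1+|S|-2j$ are not justified, because Property~$\mathbb{P}$ supplies only the two end diamonds. If $f$ lies in neither, deleting both and keeping $D_1$ plus one more edge of $S$ can be made to work. But if $f$ lies in one of them, only the other can be deleted, and shrinking $S$ stops at $D_1$, still leaving $k+1$ edges. For even $|S|\ge 4$ all your sizes are odd, and the fallback ``or an $\ell^-$-configuration'' is never argued. Parity of $k$ plays no role in this theorem.

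The missing idea is a gap-filling argument.
\begin{enumerate}[(i)]
\item Let $A_0=H\setminus T_1$ with $f\notin T_1$; this is a partial cluster with $k-1$ edges.
\item Build $D_1\cup A_0$ starting from $D_1\cup\{f\}$, following a merging order of the $1$-clusters of $A_0$ that begins with the cluster of $f$.
\item Add edges one at a time inside each $1$-cluster (Corollary~\ref{connect trimming lemma}), except that a cluster attaching by a $2$-claim first contributes its claiming diamond in one step.
\item Every intermediate configuration with $i$ edges spans at most $(r-2)i+2$ vertices, and the edge count runs from $3$ to $k+1$ in steps of $1$ or $2$. So either it hits $k$, or it jumps from $k-1$ to $k+1$.
\item In the jump case, add instead an edge of $S\setminus D_1$ meeting $V(D_1)$ in two vertices. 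Such an edge exists precisely because $|S|\ge 3$, and this gives a $k$-configuration.
\end{enumerate}
The paper phrases this as follows: in an ordering with $T^0_1=S$, take the first index $p$ with $|T^0_1\cup\cdots\cup T^0_p|\ge k+1$, show that $T^0_p$ attaches through a diamond $D'_1$, and compare with $D_1\cup T^0_2\cup\cdots\cup T^0_{p-1}\cup D'_1$.

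\textbf{Gap 2: the new witness in Step~2.} You recycle the surviving old diamond $D^*$ as the far end of $H'=(H\setminus D)\cup S$. If $f\in D^*$, then $S$ hangs off $D^*$, so $f$ meets $V(H'\setminus D^*)$ in at least two vertices and $D^*$ is not flexible in $H'$. This case does occur. Take $k=6$, $r\ge 5$, let $H$ be the chain $T_1,D_m,\{g\},T_\ell$ (diamond, diamond, single edge, diamond), and let $S$ be a diamond $\overline{1}2$-claiming a pair inside an edge of $T_\ell$. The correct far end of the new witness is then $D_m$, not $T_\ell$. The paper does not reuse the old diamond: its far end is the cluster $T^0_p$ found above, which is forced to be a diamond attached by a $\overline{1}2$-claim to a partial cluster with $k-1$ edges.

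Your flexibility argument for $S$ also has a hole. Deleting the other end diamond to get a $(k-1)^-$-configuration fails when the extra common vertex of $S$ is a private vertex of that diamond. The paper avoids this with a one-edge argument. Since $x\in e_1\setminus e_2$ and $y\in e_2\setminus e_1$, a third vertex of $V(S)\cap V(A_0)$ forces some $e_i\in S$ to meet $V(A_0)$ in two vertices. Then $A_0\cup\{e_i\}$ is a $k$-configuration. The same argument gives flexibility of $D'_1$.
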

\begin{proof}
Without loss of generality, suppose that $S$ and $T \in F$ are $1|2$-mergeable. Since $F$ satisfies Property $\mathbb{P}$, there exists $$ F_0 = T_1 \cup \cdots \cup T_m \subseteq F$$ by definition and thus satisfying~(\ref{Property P}) and~(\ref{Property P 2}). In particular, $D_0 = T_1$ and $D'_0 = T_m$ are diamonds. Let $A_0 = F_0 \setminus D_0$ and $A'_0 = F_0 \setminus D'_0$. Then $|A_0| = |A'_0| = k-1$ and $$|V(A_0)| = |V(A'_0)| = (r-2)(k-1) + 2.$$ We further prove that $S$ and $T$ are $(2|1)$-mergeable. Otherwise, there exist $\{x,y\} \subseteq e \in S$ and $f_1,f_2 \in T$ such that $|f_1 \cap f_2| = 2$ and $\{x,y\} \subseteq f_1 \cup f_2$. Since $|f_1 \cap f_2| = 2$, we have 
$$
\{f_1,f_2\} \cap D_0 = \emptyset 
\quad \text{or} \quad 
\{f_1,f_2\} \cap D'_0 = \emptyset.
$$
Without loss of generality, assume $\{f_1,f_2\} \cap D_0 = \emptyset$. Then $|A_0 \cup \{e\}| = k$ and $$ |V(A_0) \cup V(e)| \le (r-2)k + 2, $$ which contradicts the assumption that $G$ is $\mathcal{G}_k^{(r)}$-free. 

Thus, assume that there exist $e_1,e_2\in S$, $f\in F_0$, and $\{x,y\}\subseteq f$ such that $$|\{x,y\}\cup e_1\cup e_2|\leq 2r-2.$$ Without loss of generality, assume that $f\notin D_0$. Then $S$ and $A_0$ are $(2,1)$-mergeable. Let $F'_0=A_0\cup S$. By Lemma~\ref{cor 1 of trimming lemma}, there exist $T_1^0,T_2^0,\ldots,T_\ell^0\in F'_0$ that can merge in this order and form $F'_0$, where $T_1^0=S$ and $$\left|\bigcup_{j=2}^{\ell} T_j^0\right|=k-1 .$$ Let $p\in[\ell]$ be the smallest index such that $$\left|T_1^0\cup T_2^0\cup\cdots\cup T_p^0\right|\ge k+1 .$$ Then $$ \left|T_1^0\cup\cdots\cup T_{p-1}^0\right|\le k-1 \quad\text{and}\quad |T_p^0|\ge 2. $$
Let $H=T_1^0\cup\cdots\cup T_{p-1}^0$ and $H'=T_2^0\cup\cdots\cup T_{p}^0=A_0$. If $T_p^0$ and $H$ are $(2)$-mergeable, then by Corollary~\ref{connect trimming lemma} we can remove some edges from $T_p^0$ to get an $((r-2)k+2,k)$-configuration inside $H\cup T_2$, a contradiction. Thus $T_p^0$ $\overline{1}2$-claims some pair $xy$ $1$-claimed by $H$. Let $D_1 \subseteq T_1^0$ and $D'_1 \subseteq T_p^0$ be two diamonds that are $(2|1)$-mergeable with $H$ and $H'$, respectively. If $|T_1^0| \ge 3$, then $$|T_2^0 \cup \cdots \cup T_{p-1}^0| \le k-4,$$ and hence$$|D_1 \cup T_2^0 \cup \cdots \cup T_{p-1}^0 \cup D'_1| \le k.$$ By Lemma~\ref{trimming lemma}, we may delete edges from $T_1^0$ and $T_p^0$ to obtain an $((r-2)k+2,k)$-configuration in $F'_0$, a contradiction. Therefore $|T_1^0| = 2$. Similarly, $|T_p^0| = 2$. Consequently, $$ |T_1^0 \cup \cdots \cup T_p^0| = k+1 \quad \text{and} \quad p=\ell. $$
Note that $S = D_1 = T_1^0 = \{e_1,e_2\}$. 

Finally, it suffices to prove that $|V(S) \cap V(H')| = 2$ and $|V(D'_1) \cap V(H)| = 2$.
Assume for a contradiction that $|V(S) \cap V(H')| \ge 3$. 
Then there exists, without loss of generality, $e_1$ such that $|e_1 \cap A_0| \geq 2.$ Since $|A_0| = k-1$ and $|V(A_0)| = (r-2)(k-1) + 2$, it follows that $|A_0 \cup \{e_1\}| = k$ and $|V(A_0) \cup V(e_1)| \le (r-2)k + 2,$ which contradicts the assumption that $G$ is $\mathcal{G}_k^{(r)}$-free. Similarly, we obtain $|D'_1 \cap H| = 2$. Hence $|S| = 2$ and $S \cup F$ satisfies Property $\mathbb{P}$.
\end{proof}

\begin{theorem}\label{structure of T with T>=k+1}
    Let $G$ be an $r$-uniform hypergraph on $n$ vertices that is $\mathcal{G}_k^{(r)}$-free.  
Let $F \in \mathcal{M}_2$ and suppose that $|F| \geq k+1$. Then $|F| \geq 2m(F) - k + 3.$
\end{theorem}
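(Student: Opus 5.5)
The plan is to follow the merging order of $F$ and show that, once the partial cluster first exceeds $k$ edges, it satisfies Property $\mathbb{P}$. From then on Theorem~\ref{we can add 2 edges each time} forces every further $1$-cluster to have exactly two edges, and counting then gives the bound.

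\textbf{Step 1 (crossing size $k$).} Fix a merging order $F_1,\dots,F_m$ of $F$ into $1$-clusters, as in Lemma~\ref{trimming lemma}. Write $U_i=F_1\cup\cdots\cup F_i$. Recall that no $U_i$ has exactly $k$ edges, since otherwise $U_i$ would be a $k$-configuration. Moreover, each $U_i$ with $|U_i|\le k-1$ spans exactly $(r-2)|U_i|+2$ vertices, because fewer vertices would give an $\ell^-$-configuration. Let $p$ be the first index with $|U_p|\ge k+1$, so that $|U_{p-1}|\le k-1$.

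\textbf{Step 2 (the prefix has Property $\mathbb{P}$).} Argue exactly as in the proof of Theorem~\ref{we can add 2 edges each time}. If $F_p$ were $1|2$-mergeable with $U_{p-1}$ in the direction in which $F_p$ supplies a single edge, then Corollary~\ref{connect trimming lemma} would let us trim $F_p$ to reach exactly $k$ edges, giving a $k$-configuration. Hence $F_p$ $\overline{1}2$-claims a pair that is $1$-claimed by $U_{p-1}$.

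Next, reorder $U_p$ via Lemma~\ref{cor 1 of trimming lemma} and Lemma~\ref{trimming lemma}, starting from $F_p$. The same trimming argument then shows that some first cluster also attaches only through a diamond. Comparing edge counts, both end clusters must be exactly diamonds and $|U_p|=k+1$; otherwise we could delete edges from the end clusters to get a $k$-configuration. Flexibility, meaning that each edge of the diamond meets the rest in exactly one vertex, follows from the vertex-count argument at the end of that proof.

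This handles only the clusters inside $H=U_p$ itself. To verify Property $\mathbb{P}$ for every $T\in U_p$, we need the witnessing $H$ to contain $T$, and taking $H=U_p$ does this. So $U_p$ satisfies Property $\mathbb{P}$.

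\textbf{Step 3 (extension).} By induction on $i$, Theorem~\ref{we can add 2 edges each time} gives $|F_i|=2$ for every $i>p$, and $U_i$ keeps Property $\mathbb{P}$. Therefore $|F|=(k+1)+2(m-p)$.

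\textbf{Step 4 (counting).} We need $p$ to be small. Inside $U_p$, the two end clusters are diamonds and the remaining $p-2$ clusters have at least one edge each, together accounting for $k-3$ edges, so $p\le k-1$. This gives only $|F|\ge 2m-k+3$. Indeed,
\[
|F| = k+1+2m-2p \ge k+1+2m-2(k-1) = 2m-k+3,
\]
which is exactly the claimed bound.

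\textbf{Main obstacle.} The hardest step is Step 2: showing that the first partial cluster with more than $k$ edges has exactly $k+1$ edges with flexible diamond ends. This requires redoing the reordering argument of Theorem~\ref{we can add 2 edges each time} without the Property $\mathbb{P}$ hypothesis, starting only from the fact that $U_{p-1}$ is tight. One must also handle separately the case where $F_p$ is $1|2$-mergeable with $U_{p-1}$ via a single edge of $F_p$; this case is ruled out by trimming.
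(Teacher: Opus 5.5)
Your proposal is correct and follows the paper's proof essentially step for step. You take the first merge that pushes the partial cluster past $k$ edges, show that this prefix has exactly $k+1$ edges with two flexible diamond ends (hence Property~$\mathbb{P}$ with at most $k-1$ constituent $1$-clusters), use Theorem~\ref{we can add 2 edges each time} to force every later $1$-cluster to be a diamond, and then count.

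As in the paper, the step forcing both end clusters to be diamonds is only sketched (``delete edges from the end clusters''). A clean way to close it is Lemma~\ref{sum can not be k}. Let $D\subseteq F_p$ be the diamond $2$-claiming $xy$, and let $f\in U_{p-1}$ be an edge containing $xy$. Trimming $F_p$ outward from $D$ gives $C_{F_p}(xy)\supseteq\{2,\dots,|F_p|\}$. Merging $U_{p-1}$ outward from the $1$-cluster of $f$ shows that $C_{U_{p-1}}(xy)$ contains a chain from $1$ to $|U_{p-1}|$ with steps of size at most $2$. Since $k\ge 4$, this forces $|U_{p-1}|=k-1$ and $|F_p|=2$.
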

\begin{proof}
    %Since $|F|\geq m(F)+1\geq k$ and $G$ is $\mathcal{G}_k^{\,r}$-free, we have $|F|\geq k+1$. 
    If $m(F)\leq k-1$, the conclusion simply holds. We then assume that $m(F)\geq k$. Assume $F$ is obtained by merging $m$ $1$-clusters $T_1, T_2,\cdots, T_m$ in this order and let $s\in [m]$ be the smallest index such that $|\cup_{i=1}^sT_i|\geq k+1$. Let $H=T_1\cup\cdots\cup T_{s-1}$. If $T_s$ and $H$ are $(2)$-mergeable, then by Corollary~\ref{connect trimming lemma} we can remove some edges from $T_s$ to get an $((r-2)k+2,k)$-configuration inside $H\cup T_s$, a contradiction. Thus $T_s$ $\overline{1}2$-claims some pair $xy$ $1$-claimed by $H$. Let $D_0\subseteq T_s$ be the diamond $\overline{1}2$-claiming by $xy$. Let $T'_1=T_s$ and let $T'_2\in \{T_1,\cdots,T_{s-1}\}$ be a $1$-cluster $2$-mergeable with $T_s$. Let $(T'_2,\cdots,T'_{s})$ be the ordering of $\{T_1,\cdots,T_{s-1}\}$ returned by Lemma~\ref{trimming lemma}. Let $t\in [s]$ be the first index such that $|\cup_{i=1}^t T'_i|\geq k+1$. 
    
    By a similar proof as Theorem~\ref{we can add 2 edges each time}, we have that $D_0=T'_0$ and $D'_0:=T'_{s}$ are diamonds and $F_0=\cup_{i=1}^t T'_i$ satisfies that $|F_0|=k+1$. Moreover, $F_0$ has Property $\mathbb{P}$. If $F_0=F$, we are done. Otherwise there exists $T\in F\setminus F_0$ such that $T$ and $F_0$ are $1|2$-mergeable. By Theorem~\ref{we can add 2 edges each time}, we have $|T|=2$ and $F_1 = F_0 \cup T \subseteq F$ satisfies Property $\mathbb{P}$. Hence $|F_1| = |F_0| + 2$.

    Suppose we have obtained $F_i \subseteq F$ satisfying Property $\mathbb{P}$ with $|F_i| = |F_0| + 2i$. If $F_i = F$, then since $m(F_0) \le k-1$, we have $$ |F_i| \ge 2\bigl(m(F)-m(F_0)\bigr) + k + 1 \ge 2m(F) - k + 3, $$
and we are done. Otherwise, there exists $T' \in F \setminus F_i$ such that $F_i$ and $T'$ are $1|2$-mergeable. By Theorem~\ref{we can add 2 edges each time}, $|T'|=2$ and $F_{i+1} = F_i \cup T' \subseteq F$ also satisfies Property $\mathbb{P}$. This process terminates after finitely many steps, and therefore $|F| \ge 2m(F) - k + 3,$ as required.
\end{proof}

From the above proof, we obtain the following consequence.

\begin{theorem}\label{detail structure F>=k+1}
Let $F \in \mathcal{M}_2$ be obtained by merging $1$-clusters $T_1,\cdots,T_m$ such that $|F| \ge k+1$. Then $F$ can be obtained by $1$-clusters $ T'_1, \ldots, T'_\ell, T'_{\ell+1}, \ldots, T'_m $ merging in this order such that $\{T'_i\}_{i=1}^m$ is a re-ordering of $\{T_i\}_{i=1}^m$ and $H := \bigcup_{i=1}^{\ell} T'_i$ satisfies Property $\mathbb{P}$, where $T'_1, T'_\ell$ serve as the two flexible diamonds in the definition. Moreover, for every $i \ge \ell+1$, we have $|T'_i| = 2$, $T'_i$ is $(2|1)$-mergeable with $\bigcup_{j=1}^{i-1} T'_j$, and $\bigcup_{j=1}^{i} T'_j$ still satisfies Property $\mathbb{P}$.    
\end{theorem}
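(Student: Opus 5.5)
We extract the statement from the proof of Theorem~\ref{structure of T with T>=k+1}, making the ordering explicit. Fix the merging order $T_1,\dots,T_m$ of $F$ and let $s$ be the smallest index with $|\bigcup_{i=1}^s T_i|\ge k+1$; such an $s$ exists because $|F|\ge k+1$. Put $H=T_1\cup\dots\cup T_{s-1}$. Since no partial cluster has exactly $k$ edges and $|H|\le k-1$, the cluster $T_s$ cannot be $(2)$-mergeable with $H$. Otherwise Corollary~\ref{connect trimming lemma} would let us trim $T_s$ to a connected piece that, together with $H$, gives a $k$-configuration. So $T_s$ only $\overline{1}2$-claims a pair $1$-claimed by $H$, and we get a diamond $D_0\subseteq T_s$. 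We then restart the merging from $T'_1:=T_s$, using Lemma~\ref{trimming lemma} with $F_0=T_s$ to reorder $T_1,\dots,T_{s-1}$ as $T'_2,\dots,T'_s$ so that each prefix is a partial $1|2$-cluster. Let $\ell$ be the first index with $|\bigcup_{i\le\ell}T'_i|\ge k+1$.

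Next we show that $H:=\bigcup_{i=1}^{\ell}T'_i$ has Property $\mathbb{P}$ with $T'_1,T'_\ell$ as the flexible diamonds. We rerun the counting from the proof of Theorem~\ref{we can add 2 edges each time}. The last cluster $T'_\ell$ must again $\overline{1}2$-claim, so it also contains a diamond $D'_0$. If $|T'_1|\ge3$ or $|T'_\ell|\ge3$, then $D_0\cup T'_2\cup\dots\cup T'_{\ell-1}\cup D'_0$ has at most $k$ edges. Trimming via Lemma~\ref{trimming lemma} then produces a $k$-configuration, a contradiction. Hence both end clusters are diamonds and $|H|=k+1$.

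For flexibility, suppose $|V(T'_1)\cap V(H\setminus T'_1)|\ge 3$. Then some edge $e\in T'_1$ meets $A_0=H\setminus T'_1$ in at least two vertices. Since $|V(A_0)|=(r-2)(k-1)+2$, the graph $A_0\cup\{e\}$ is a $k$-configuration, a contradiction; the same argument applies to $T'_\ell$. Property $\mathbb{P}$ asks for the condition at every $T\in H$, not only at the endpoints. We handle this by noting that $H$ itself, with its fixed ordering, is a witness for each of its member clusters $T$. This is the delicate point: we must check that the witness required in \eqref{Property P} can be taken to be $H$ for every $T\in H$.

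Then we extend greedily. Let $T'_{\ell+1},\dots,T'_m$ be the ordering given by Lemma~\ref{trimming lemma} applied to $F_0=H\subseteq F$, which is legitimate since $F\in\mathcal{M}_2$ and $H$ is a partial cluster. At each step $\bigcup_{j<i}T'_j$ has Property $\mathbb{P}$ and is $1|2$-mergeable with $T'_i$. Theorem~\ref{we can add 2 edges each time} then gives $|T'_i|=2$ and that the union again has Property $\mathbb{P}$. Its proof also shows the merge must be $(2|1)$ rather than $(1|2)$: a $1$-claim by $T'_i$ on a pair covered by a diamond in $F$ yields a $k$-configuration. Induction on $i$ finishes the argument.

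The main obstacle is the middle step, making the phrase ``by a similar proof'' rigorous. Two things need care there. First, the prefix $T'_2,\dots,T'_{\ell-1}$ together with either end diamond must span exactly $(r-2)(k-1)+2$ vertices, which uses $\mathcal{G}_k^{(r)}$-freeness to exclude $\ell^-$-configurations. Second, the reordered sequence must start with a genuine diamond, which may require replacing $T'_1$ by $D_0$ inside $T_s$ and using $|T_s|=2$.
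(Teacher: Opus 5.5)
You follow the paper's own route. The paper gets this statement from the proof of the theorem giving $|F|\ge 2m(F)-k+3$: take the minimal crossing index $s$, restart from $T_s$ via Lemma~\ref{trimming lemma}, then grow with Theorem~\ref{we can add 2 edges each time}. Your final extension paragraph is fine. The gap is the base step, which the paper covers only by ``a similar proof as Theorem~\ref{we can add 2 edges each time}'' and which your sketch does not establish.

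First, your argument that $T'_1$ is a diamond (``$D_0\cup T'_2\cup\cdots\cup T'_{\ell-1}\cup D'_0$ has at most $k$ edges, then trim'') has no valid starting bound. The clusters $T'_3,\dots,T'_{\ell-1}$ were merged into prefixes containing all of $T'_1$, possibly through pairs lying in edges of $T'_1\setminus D_0$. After you discard those edges, $|V|\le (r-2)|E|+2$ can fail, so adding edges back one at a time proves nothing. Also, $D_0$ is not a part of $\mathcal{M}_1$, so ``replacing $T'_1$ by $D_0$'' in a merging order is not available. The count itself is also off: if $|T'_1|=2$ but $|T'_\ell|\ge 3$, you only get $k+1$ edges. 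In Theorem~\ref{we can add 2 edges each time} these counts worked only because $A_0$ was known beforehand to have $k-1$ edges and $(r-2)(k-1)+2$ vertices.

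Second, your flexibility argument uses $|V(H\setminus T'_1)|\le (r-2)(k-1)+2$. But $H\setminus T'_1$ is a suffix of the merging order, not a partial cluster. $\mathcal{G}_k^{(r)}$-freeness gives only the reverse inequality, and in Theorem~\ref{we can add 2 edges each time} the upper bound came from the assumed flexibility of the other end diamond.

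The missing ingredient is a proof that $|T_s|=2$; everything else is then prefix counting. Put $Q=T_1\cup\cdots\cup T_{s-1}$. Your crossing argument also gives $|Q|=k-1$: if $|Q|\le k-2$, then $Q\cup D_0$ plus edges of $T_s$ added in connected order passes through a $k$-configuration. Hence $|V(Q)|=(r-2)(k-1)+2$, and every edge of $T_s$ meets $V(Q)$ in at most one vertex.

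Now suppose $|T_s|\ge 3$. Let $g\in Q$ contain the pair $xy$ claimed by $D_0$, and let $K$ be its $1$-cluster. Start from $D_0\cup\{g\}$ and add the edges of $K$ in connected order. Then add the other clusters of $Q$ in the order Lemma~\ref{trimming lemma} gives from $K$, each starting with its merging edge or merging diamond. The quantity $(r-2)|E|+2-|V|$ stays nonnegative, except right after the first edge of a cluster attached by $\overline{1}2$-claiming, where it is at least $-1$. The sequence runs from $3$ to $k+1$ edges. So at $k$ edges you either have a $k$-configuration, or the $k$-th edge is such a first edge. In the latter case, replacing it by an edge of $T_s\setminus D_0$ meeting $V(D_0)$ in two vertices gives a $k$-configuration.

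With $|T_s|=2$, $T_s$ is a flexible diamond and $|Q\cup T_s|=k+1$. Restart from $T_s\cup K$. The prefix before the crossing again has $k-1$ edges, so the crossing is at the last index, and that last cluster has two edges and is flexible by the same one-edge argument. Thus $\ell=s$ and $H=Q\cup T_s$, so your flexibility argument applies with $H\setminus T'_1=Q$. The Property $\mathbb{P}$ witness issue you flag is then trivial, since $H$ with this order serves every member.
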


\begin{theorem}\label{Lower bound of P1 and P_12}
    For any $F\in \mathcal{M}_2$ with a composition $(e_1,\cdots,e_m)$, we have 
    \begin{equation}\label{P1 and P12 estimate}
    |P_{1}(F)|=\sum_{i\in [m]}\left(e_i\binom{r}{2}-e_i+1\right) \quad \text{and}\quad |P_{\overline{1}2}(F)|\geq 1-m+\sum_{i\in [m]}(e_i-1)(r-2)^2.
    \end{equation}
\end{theorem}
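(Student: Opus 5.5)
The plan is to handle the two quantities separately, first inside a single $1$-cluster and then across the merging process, using $\mathcal{G}_k^{(r)}$-freeness to rule out coincidences among claimed pairs.

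\textbf{Step 1: $P_1$ of a single $1$-cluster.} Let $T\in\mathcal{M}_1$ with $|T|=e$. By Corollary~\ref{connect trimming lemma} we order its edges $X_1,\dots,X_e$ so that every prefix is connected. Each new edge $X_j$ meets the union of the previous ones in at least two vertices. If it met them in at least three vertices, the prefix $\{X_1,\dots,X_j\}$ would span at most $(r-2)j+1$ vertices, so it would be a $j^-$-configuration; for $j\le k-1$ this is forbidden. Note also that $|T|\le k-1$, since otherwise a connected $k$-edge prefix would be a $k$-configuration. Hence each $X_j$ with $j\ge2$ meets the earlier edges in exactly two vertices. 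These two vertices form a pair already $1$-claimed, because the same argument forces them to lie in a single earlier edge: otherwise we get a vertex deficit and again a forbidden $j^-$-configuration. Any other pair inside $X_j$ has at least one vertex outside $V(X_1\cup\dots\cup X_{j-1})$, so it is new. Each step therefore adds exactly $\binom r2-1$ new $1$-claimed pairs, and $|P_1(T)|=e\binom r2-e+1$.

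\textbf{Step 2: additivity of $P_1$ over the composition.} Take two distinct $1$-clusters $T_i,T_j$ in $F$. They cannot share a pair $xy$ lying in an edge of each, since then the two edges would meet in at least two vertices and the clusters would be connected, hence equal. So the sets $P_1(T_i)$ are pairwise disjoint, and summing Step 1 over the composition gives the stated equality for $|P_1(F)|$.

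\textbf{Step 3: the lower bound on $P_{\overline{1}2}$.} Inside each $T_i$, every diamond $\{X_j,X_{j'}\}$ with $|X_j\cap X_{j'}|=2$ $2$-claims every pair $\{u,v\}$ with $u\in X_j\setminus X_{j'}$ and $v\in X_{j'}\setminus X_j$. There are $(r-2)^2$ such pairs, and none lies in a single edge of that diamond. Using the connected ordering from Step 1, each new edge $X_j$ forms a diamond with the earlier edge containing its two attachment vertices, so we get $e_i-1$ diamonds. Their cross pairs are distinct, because each such pair involves a new vertex of $X_j$ together with a vertex of the older edge not in the attachment pair.

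These pairs are $2$-claimed, but we also need them not to be $1$-claimed by $F$. A cross pair $uv$ that is $1$-claimed by some edge $Y$ of $F$ gives a configuration $\{X_j,X_{j'},Y\}$ with too few vertices. Inside the same cluster, or when $Y$ meets the diamond in at least two vertices, this is a forbidden $3^-$-configuration or enlarges a connected piece into a $j^-$-configuration; here we use $k\ge4$ and the bounds from Step 1. The remaining case is $Y$ in another cluster containing exactly $u$ and $v$ from the diamond, and this is exactly a $2|1$-merge. We bound such collisions by charging each one to a merging step. Each of the $m-1$ merges in the composition can be shown to destroy at most one such pair: a second destroyed pair at the same merge, or the same pair destroyed twice, would produce a $\ell^-$-configuration with $\ell\le k-1$. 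The merged parts can be trimmed to that size by Lemma~\ref{trimming lemma}, and for large merged parts the structure given by Theorem~\ref{detail structure F>=k+1} is used. This loss accounts for the term $1-m$.

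The main obstacle is this charging argument in Step 3: showing that the merges collectively destroy at most $m-1$ of the $\sum_i (e_i-1)(r-2)^2$ cross pairs, and in particular controlling it when $|F|\ge k$, where only local subconfigurations can be exhibited. The approach there is to take, for a hypothetical second collision, the smallest partial cluster witnessing it and trim it with Lemma~\ref{trimming lemma} to at most $k-1$ edges, so that it becomes a forbidden $\ell^-$-configuration.
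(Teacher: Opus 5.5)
The overall plan matches the paper. Your Steps 1--2 and the per-cluster diamond count are the paper's two claims plus its disjointness argument, and the paper also charges at most one lost pair to each merge. For merges that keep the size at most $k-1$ your reasoning works, since $T_{i+1}$ then meets $F_i$ in exactly two vertices.

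\textbf{The gap: the large case of the charging step.} This is where you yourself locate the obstacle. Here $T_{j+1}=\{f_1,f_2\}$ is a diamond merged with $F_j$ via $xy$ and an edge $f$ of a $1$-cluster $T'$. A hypothetical second pair $x_0y_0$ is claimed by a $1$-cluster $T'_0\subseteq F_j$. You propose to trim a partial cluster containing both witnesses to at most $k-1$ edges and read off an $\ell^-$-configuration. That only works if some partial cluster containing both $T'$ and $T'_0$ has at most $k-3$ edges. If $T'$ and $T'_0$ are far apart in $F_j$, every such subgraph together with $T_{j+1}$ has at least $k$ edges, so no deficit below $k$ can be exhibited. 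The paper closes this case with $k$-configurations instead:
\begin{itemize}
\item Order $F_j$ by Lemma~\ref{trimming lemma} starting from $T'_0$. Let $H$ be the union of the clusters before the size first reaches $k-1$, so $|H|\le k-2$.
\item $|H|=k-2$ is impossible, since $H\cup T_{j+1}$ is a $k$-configuration.
\item If $T'\subseteq H$, you get your $\ell^-$-configuration.
\item Otherwise $|H|=k-3$ is excluded by the $k$-configuration $H\cup T_{j+1}\cup\{f\}$.
\item Finally $|H|\le k-4$ is excluded by trimming the next cluster $T'_t$ (which has at least $3$ edges) edge by edge, so that $H\cup T_{j+1}$ plus the trimmed part has exactly $k$ edges.
\end{itemize}
The point is that the diamond contributes two edges, $f$ one, and $T'_t$ can be added one edge at a time, so size exactly $k$ can be hit. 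This ingredient is missing from your sketch, and $\ell^-$-configurations alone cannot close the case.

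\textbf{A second, smaller omission.} You only charge cross pairs that become $1$-claimed, and you implicitly treat the $\sum_i(e_i-1)(r-2)^2$ cross pairs as distinct across different $1$-clusters. But two $1$-clusters of the same $2$-cluster may both $2$-claim one pair; this is not a merge relation, and it would make your sum overcount. The paper accounts for this by bounding $|P_{\overline{1}2}(T_{i+1})\cap P_{\le 2}(F_i)|$, i.e.\ it also intersects with pairs $2$-claimed by $F_i$, and the same case analysis applies.

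\textbf{A minor slip.} An edge $Y\supseteq\{u,v\}$ together with the diamond spans at most $3r-4$ vertices, which is a $3$-configuration, not a $3^-$-configuration. Inside one cluster it is cleaner to argue as the paper does: add edges in connected order, so that every new cross pair contains a vertex not yet covered.
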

\begin{proof}
    Assume that $F$ is obtained by merging $T_1,\cdots,T_m$ in this order such that $|T_i|=e_i$ for $i\in [m]$. To prove the first equality, it suffices to show that for every $i \in [m]$, 
    \begin{equation}\label{P1 M1 size equation} |P_1(T_i)| = e_i\binom{r}{2} - e_i + 1,\end{equation}
 and that for every $i \in [m-1]$,
\begin{equation}\label{M1 P1 inter empty}P_1(T_{i+1}) \cap P_1\left(\bigcup_{j=1}^{i} T_j\right) = \emptyset. \end{equation}
Here, (\ref{M1 P1 inter empty}) holds because otherwise there exists $i \in [m-1]$ such that $T_{i+1}$ and some $T' \in \{T_1,\ldots,T_i\}$ would have been merged in $\mathcal{M}_1$. The following claim shows that (\ref{P1 M1 size equation}) holds.
    \begin{claim}\label{P1 M1 size equation Proof}
        Let $M\in \mathcal{M}'_1$ and $|M|=a$. Then $a\leq k-1$ and $|P_1(M)|=a\binom{r}{2}-a+1$.
    \end{claim}
    \begin{proof}[Proof of Claim~\ref{P1 M1 size equation Proof}]
    First, if $a \geq k+1$, then by Corollary~\ref{connect trimming lemma} we can delete some edges to obtain a $k$-configuration in $M$, a contradiction. Hence $a \leq k-1$.

    We prove the statement by induction on $a$. When $a=1$, the conclusion clearly holds. Now assume that for some $a \geq 2$, the statement holds for every partial $1$-cluster of size $a-1$. By the claim, consider the case $|M|=a$. By Corollary~\ref{connect trimming lemma}, there exists $M' \subseteq M$ that is also a partial $1$-cluster and satisfies $|M\setminus M'|=1$. Thus $|M'|=a-1$ and $|P_1(M')|=(a-1)\binom{r}{2}-a+2.$ It remains to show that $|P_1(M\setminus M')\cap P_1(M')|=1.$ It suffices to prove that $|V(M')\cap V(M\setminus M')|=2,$ which indeed holds. Otherwise, $$|V(M)|=|V(M')\cup V(M\setminus M')| \leq |V(M')|+r-3\leq (a-1)(r-2)+2+r-3 = (r-2)a+1, $$ contradicting the assumption that $G$ is $\mathcal{G}_k^{(r)}$-free since $a \le k - 1$.
    \end{proof}
    
    Now we prove the second inequality. It suffices to show that for every $i\in [m]$, \begin{equation}\label{ineq P-12 >= (e-1)(r-2)2}
        |P_{\overline{1}2}(T_i)|\geq (e_i-1)(r-2)^2
    \end{equation}
    and that for every $i\in [m-1]$, \begin{equation}\label{ineq P-12 cap P1 P2 = 1}
        |P_{\overline{1}2}(T_{i+1})\cap P_{\leq 2}\left(\bigcup_{j=1}^i T_j\right)|=1
    \end{equation}

    The inequality (\ref{ineq P-12 >= (e-1)(r-2)2}) holds because of the following.
    \begin{claim}\label{P12 M1 ineq proof}
        Let $M\in \mathcal{M}'_1$ and $|M|=a\leq k-1$, then $|P_{\overline{1}2}(M)|\geq (a-1)(r-2)^2.$
    \end{claim}
    \begin{proof}[Proof of Claim~\ref{P12 M1 ineq proof}]
        We prove the statement by induction on $a$. When $a=1$ or $2$, the conclusion clearly holds. Now assume that for some $a \geq 3$, the statement holds for every partial $1$-cluster of size $a-1$. Consider the case $|M|=a$. By Corollary~\ref{connect trimming lemma}, there exists a partial $1$-cluster $M' \subseteq M$ such that $|M'|=a-1$ and $|M\setminus M'|=1$. Let $M\setminus M'=\{e'\}$. Then $|V(e')\cap V(M')|=2$, and $e'$ together with some edge in $M'$ forms a diamond. Hence $|P_{\overline{1}2}(M)\setminus P_{\overline{1}2}(M')|\geq (r-2)^2.$ Therefore, $$|P_{\overline{1}2}(M)|=|P_{\overline{1}2}(M')|+|P_{\overline{1}2}(M)\setminus P_{\overline{1}2}(M')|\geq (a-2)(r-2)^2+(r-2)^2=(a-1)(r-2)^2.$$ %This completes the proof.
    \end{proof}
    It remains to prove that (\ref{ineq P-12 cap P1 P2 = 1}) holds. If $|F|\leq k-1$, then (\ref{ineq P-12 cap P1 P2 = 1}) holds since for each $i\in [m-1]$, $|V(T_{i+1})\cap (\cup_{j=1}^iT_j)|=2$. Now assume that $|F| \geq k+1$, and without loss of generality suppose that $T_1,\ldots,T_\ell,\ldots,T_m$ is the ordering of $1$-clusters given by Theorem~\ref{detail structure F>=k+1}. For $i\in[m]$, let $F_i=\bigcup_{j=1}^{i} T_j .$ It suffices to prove that for every $i\in [m-1]$, there is at most one pair  $xy\in P_{\overline{1}2}(T_{i+1})$ such that  $ xy \in P_1(F_i) \cup P_2(F_i). $

Note that when $i\le \ell-2$, the conclusion holds. Now assume that the conclusion holds for all $i\le j-1$ for some $j\ge \ell-1$, and consider the case $i=j$. Then $|F_j|\ge k-1$ and $|F_{j+1}|=2$. Furthermore, let $T_{j+1} = \{f_1,f_2\}$, and suppose that $T_{j+1}$ and  $T' \in \{T_1,\ldots,T_j\}$ are $(2|1)$-mergeable via the pair $xy$ and an edge $f \in T'$. For a contradiction, assume that there exist $x_0 \in f_1$ and $y_0 \in f_2$ with $\{x_0,y_0\} \neq \{x,y\}$ such that $x_0y_0 \in P_1(F_j) \cup P_2(F_j).$ 

If $x_0y_0 \in P_1(F_j)$, then there exists $T'_0 \in \{T_1,\ldots,T_j\} \setminus \{T'\}$ such that $T_{j+1}$ and $T'_0$ are $(2|1)$-mergeable via $x_0y_0$. By Lemma~\ref{trimming lemma} and the pair $T'_0 \subseteq F_j$, there exist $1$-clusters $T'_0, \ldots, T'_{k-1}$ such that $F_k$ is obtained by merging these $1$-clusters in this order. Let $t$ be the smallest index such that $|T'_0 \cup \cdots \cup T'_t| \geq k-1$ and denote $ H = T'_0 \cup \cdots \cup T'_{t-1},$ so that $|H| \leq k-2$ and $|V(H)|\leq (r-2)|H|+2$. First, $|H| \neq k-2$, because otherwise $|H \cup T_{j+1}| = k$ and $ |V(H) \cup V(T_{j+1})| \leq (r-2)k + 2,$ contradicting the assumption that $G$ is $\mathcal{G}_k^{(r)}$-free. Therefore, $|H| \le k-3$. 
\begin{case}
    If $T'\in \{T'_0,\cdots,T'_{t-1}\}$, then $|V(H)\cap V(T_{j+1})|\geq 3$, $|H\cup T_{j+1}|\leq k-1$. Hence $$|V(H)\cup V(T_{j+1})|\leq (r-2)|H\cup T_{j+1}|+1,$$ a contradiction. 
\end{case}
\begin{case}
    If $T'\notin \{T'_0,\cdots,T'_{t-1}\}$, then $|H|\leq k-4$. Otherwise if $|H|=k-3$, then $|H\cup T_{j+1}\cup \{f\}|=k$. Since $|V(f)\cap V(T_{j+1})|=2$ and $|V(T_{j+1})\cap H|=2$, we have that $$|V(H)\cup V(T_{j+1})\cup V(f)|\leq (r-2)k+2,$$ a contradiction. Therefore, $|H|\leq k-4$ and $|T'_{t}|\geq 3$. Let $H_1=H\cup T_{j+1}$ and $H_2=H\cup T_{j+1}\cup \{f\}$. Since $|H_1| \leq k-2$, $|H_2| \leq k-1$, and $$|V(H_1)| \le (r-2)|H_1| + 2, \quad |V(H_2)| \le (r-2)|H_2| + 2, $$  and moreover $$ |H_2 \cup T'_t| \ge |H_1 \cup T'_t| \ge k+1, $$ we can always remove edges from $T'_t$ to obtain a $k$-configuration inside $H_2\cup T'_t$, which is a contradiction.
\end{case}
If $x_0y_0 \in P_2(F_j)\setminus P_1(F_j)$, then there exists $T''_0 \in \{T_1,\ldots,T_j\}$ and a diamond $D \subseteq T''_0$ such that $x_0y_0 \in P_{\overline{1}2}(D)$. The remaining proof is similar to the case when $x_0y_0 \in P_1(F_j)$ discussed above.
\end{proof}

\subsubsection{Assigning weights}
Let $k\geq 4$ be an even positive integer and let $G$ be a $\mathcal{G}_k^{(r)}$-free $r$-graph with $n$ vertices. For a $2$-cluster $F$ and a pair $xy\in \binom{V(F)}{2}$, we define $$
w_F(xy) = \begin{cases}
1, & \text{if } 1 \in C_F(xy),\\[2mm]
\frac{2}{k-2}, & \text{if } 2 \in C_F(xy) \text{ and } 1 \notin C_F(xy),\\[1mm]
0, & \text{otherwise}. \end{cases}$$
And we define $$w(F):=\sum_{xy\in \binom{V(F)}{2}}w_{F}(xy), \quad \text{ for } F\in \mathcal{M}_2,$$ and $$w(xy):=\sum_{F\in \mathcal{M}_2,uv\in \binom{V(F)}{2}}w_{F}(uv), \quad \text{ for }xy\in\binom{V(G)}{2}.$$

We will need the following useful observation.
\begin{lemma}[\cite{Glock_Kim_Lichev_Pikhurko_Sun_2025}]\label{sum can not be k}
    For any $\mathcal{F}^{(r)}((r-2)k+2,k)$-free graph $G$, any $uv\in \binom{V(G)}{2}$ and any edge-disjoint subgraphs $F_1,\cdots,F_s\subseteq G$, the sum-set $\sum_{i=1}^s C_{F_i}(uv)=\{\sum_{i=1}^s m_i|m_i\in C_{F_i}(uv)\}$ does not contain $k$.
\end{lemma}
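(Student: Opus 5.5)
The statement to prove is Lemma~\ref{sum can not be k}: if $G$ is $\mathcal{F}^{(r)}((r-2)k+2,k)$-free and $F_1,\dots,F_s\subseteq G$ are edge-disjoint, then for every pair $uv$ the sum-set $\sum_{i=1}^s C_{F_i}(uv)$ does not contain $k$.

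I would argue by contradiction, gluing the witnessing edge sets along the common pair $uv$. Suppose $k=\sum_{i=1}^s m_i$ with $m_i\in C_{F_i}(uv)$. By the definition of $C_{F_i}(uv)$, for each $i$ there are $m_i$ distinct edges $E_i\subseteq F_i$ with
\[
\Bigl|\{u,v\}\cup \bigcup_{e\in E_i} e\Bigr|\le (r-2)m_i+2 .
\]
If $m_i=0$, then $E_i=\emptyset$ and the bound reads $|\{u,v\}|\le 2$, which is trivially true.

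Set $E=\bigcup_i E_i$. Since the $F_i$ are edge-disjoint, the $E_i$ are pairwise disjoint, so $|E|=\sum_i m_i=k$. To bound the vertices, write $W_i=\{u,v\}\cup\bigcup_{e\in E_i}e$. Every $W_i$ contains $\{u,v\}$, so
\[
\Bigl|\bigcup_i W_i\Bigr|\le 2+\sum_i\bigl(|W_i|-2\bigr)\le 2+\sum_i (r-2)m_i=(r-2)k+2 .
\]
The edges of $E$ span at most $\bigl|\bigcup_i W_i\bigr|$ vertices, so $E$ is a set of $k$ edges on at most $(r-2)k+2$ vertices. That is a member of $\mathcal{F}^{(r)}((r-2)k+2,k)$ inside $G$, a contradiction.

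No step is a real obstacle. The only points to check are that edge-disjointness gives exactly $k$ distinct edges, and that the shared pair $\{u,v\}$ is subtracted once per part, which is what makes the vertex bounds add up to exactly $(r-2)k+2$.
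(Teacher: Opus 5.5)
Your proof is correct: you take the witnessing edge sets $E_i\subseteq F_i$, use edge-disjointness to get exactly $k$ distinct edges, and count the shared pair $\{u,v\}$ only once, which yields a $k$-edge subgraph of $G$ on at most $(r-2)k+2$ vertices, and such a subgraph is forbidden. The paper does not prove this lemma itself (it is quoted from Glock, Kim, Lichev, Pikhurko and Sun), and your gluing argument along the common pair is the standard justification behind it.
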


\begin{lemma}\label{w(xy)<=1}
    For every $xy\in \binom{V(G)}{2}$, it holds that $w(xy)\leq 1$.
\end{lemma}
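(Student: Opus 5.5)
Fix a pair $xy$. The contributions to $w(xy)$ come from the $2$-clusters $F\in\mathcal{M}_2$ with $w_F(xy)>0$, i.e.\ those with $1\in C_F(xy)$ or $2\in C_F(xy)$. These clusters are edge-disjoint, so Lemma~\ref{sum can not be k} applies to them. The plan is to bound the number and type of such clusters and then do a short case analysis.

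First, at most one cluster can $1$-claim $xy$. Suppose two distinct $F,F'\in\mathcal{M}_2$ both $1$-claim $xy$. Each $1$-claim comes from an edge containing $x,y$. These two edges share at least two vertices, so they lie in the same $1$-cluster and hence in the same $2$-cluster, a contradiction.

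Second, if one cluster $1$-claims $xy$, then no other cluster can $\overline{1}2$-claim it. Suppose $F$ $1$-claims $xy$ via the $1$-cluster $T\subseteq F$, and another $F'\neq F$ $2$-claims $xy$ via a diamond $D$ inside a $1$-cluster $T'\subseteq F'$. Then $T$ and $T'$ are $(1|2)$-mergeable via $xy$. This contradicts the maximality of the $1|2$-merging that produced $\mathcal{M}_2$, since $F$ and $F'$ would have been merged. So in this case $w(xy)=1$.

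It remains to treat the case where no cluster $1$-claims $xy$. Then every contributing cluster $F_1,\dots,F_s$ $\overline{1}2$-claims $xy$ and contributes exactly $\frac{2}{k-2}$. So $w(xy)=\frac{2s}{k-2}$, and the goal is $s\le \frac{k-2}{2}$. Since $2\in C_{F_i}(xy)$ for each $i$, Lemma~\ref{sum can not be k} says that $2s'\neq k$ for every $s'\le s$; this uses only $0,2$ from each $C_{F_i}(xy)$. Because $k$ is even, taking $s'=k/2$ shows $s\le k/2-1=\frac{k-2}{2}$. Hence $w(xy)\le 1$.

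The main point needing care is the second step. One must check that a $2$-claim of $xy$ by $F'$ is realized inside a single $1$-cluster. This holds because the two claiming edges form a diamond (they share exactly $2$ vertices given the vertex bound), so they are connected and lie in one $1$-cluster. With that, the $(1|2)$-mergeability condition matches the merging rule used to build $\mathcal{M}_2$.
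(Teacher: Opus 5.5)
Your proof is correct and follows the paper's argument essentially step for step: a $1$-claiming cluster is unique by the merging rule of $\mathcal{M}_1$, no other cluster can $2$-claim $xy$ by maximality of the $1|2$-merging, and $s\le \frac{k}{2}-1$ follows from Lemma~\ref{sum can not be k} with $m_i=2$. Your extra check that a $2$-claim lies inside a single $1$-cluster is correct but not needed, since $1|2$-mergeability is tested directly on the final parts $F,F'\in\mathcal{M}_2$.
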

\begin{proof}
    Given $uv\in \binom{V(G)}{2}$ and let $F_1,\cdots,F_s$ be all $2$-clusters assigning positive weight to $uv$, ordered so that $w_{F_1}(uv)\geq w_{F_2}(uv)\geq \cdots \geq w_{F_s}(uv)$. By the definition of weights, each $F_i$ satisfies that $1\in C_{F_i}(uv)$ or that $2\in C_{F_i}(uv)$ and $1\notin C_{F_i}(uv)$. 

    If there exists, say $F_1$, such that $1\in C_{F_1}(uv)$, then $w_{F_1}(uv)=1$. Moreover, for any $2$-clusters $F\in \mathcal{M}_2\setminus \{F_1\}$ we have $1\notin C_{F}(uv)$ and $2\notin C_{F}(uv)$, as otherwise we would merge them together by the merging rule of $\mathcal{M}_1$ and $\mathcal{M}_2$ respectively. Thus $w(uv)=w_{F_1}(uv)=1$.

    If for any $F\in \mathcal{M}_2$, $1\notin C_{F}(uv)$. Then by Lemma~\ref{sum can not be k}, there are at most $\frac{k}{2}-1$ $2$-clusters such that $2\in C_{F}(uv)$. Therefore, in this case, $s\leq \frac{k-2}{2}$ and hence $w(xy)\leq s\cdot \frac{2}{k-2}\leq 1$.
\end{proof}

\begin{lemma}\label{wF>=r choose 2 F}
    Let $k\geq 4$ and $r\geq 2+\sqrt{\frac{3}{2}k-4}$. For all $F\in \mathcal{M}_2$, we have $w(F)\geq \binom{r}{2}|F|$.
\end{lemma}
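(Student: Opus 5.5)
The plan is to reduce the inequality to the two counts in Theorem~\ref{Lower bound of P1 and P_12}, and then control the merging number $m=m(F)$ in terms of $|F|$ using the structural results for $\mathcal{M}_2$.

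\textbf{Step 1: rewrite $w(F)$.} By the definition of the weights,
\[
w(F)=|P_1(F)|+\tfrac{2}{k-2}|P_{\overline{1}2}(F)|.
\]
Let $F$ have composition $(e_1,\dots,e_m)$, and write $f=|F|=\sum_i e_i$. Theorem~\ref{Lower bound of P1 and P_12} gives
\[
|P_1(F)|=\tbinom r2 f-f+m,\qquad |P_{\overline{1}2}(F)|\ge 1-m+(r-2)^2(f-m).
\]
So it suffices to prove
\[
\tfrac{2}{k-2}\bigl((r-2)^2(f-m)+1-m\bigr)\ge f-m .
\]
Set $d=f-m\ge 0$. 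The target becomes
\[
2(r-2)^2 d-2(m-1)\ge (k-2)d .
\]

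\textbf{Step 2: the degenerate case $d=0$.} Then every $1$-cluster of $F$ is a single edge. A $1|2$-merging needs one side to $2$-claim a pair, which requires a diamond inside a single $1$-cluster. So no merging can occur, hence $m=1$, and the target reads $0\ge 0$.

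\textbf{Step 3: reduce to a bound on $m$.} Assume $d\ge 1$. The hypothesis $r\ge 2+\sqrt{\tfrac32k-4}$ gives $2(r-2)^2\ge 3k-8$. Then
\[
2(r-2)^2d-2(m-1)-(k-2)d\ \ge\ (2k-6)d-2(m-1),
\]
so it suffices to show
\[
m-1\le (k-3)d .
\]

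\textbf{Step 4: prove $m-1\le (k-3)d$, splitting by $|F|$.} Note $f\ne k$ by $\mathcal{G}_k^{(r)}$-freeness.
\begin{itemize}
\item If $f\le k-1$: since $d\ge1$ we have $m\le f-1\le k-2$, so $m-1\le k-3\le (k-3)d$.
\item If $f\ge k+1$ and $m\le k-2$: the same bound $m-1\le k-3\le(k-3)d$ applies.
\item If $f\ge k+1$ and $m\ge k-1$: Theorem~\ref{structure of T with T>=k+1} gives $f\ge 2m-k+3$, so $d\ge m-k+3\ge 2$. It then suffices that $m-1\le (k-3)(m-k+3)$. This holds with equality at $m=k-2$. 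For $m\ge k-2$ the right-hand side grows with slope $k-3\ge1$, while the left-hand side grows with slope $1$, so it holds for all larger $m$.
\end{itemize}
This yields $w(F)\ge\binom r2|F|$.

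\textbf{Main obstacle.} The arithmetic above is routine. The real weight rests on the second estimate of Theorem~\ref{Lower bound of P1 and P_12}, namely that each newly merged $1$-cluster loses at most one $\overline{1}2$-pair to overlaps, and on the linear bound on $m$ from Theorem~\ref{structure of T with T>=k+1}. Both are available from earlier in the paper. The only delicate point I expect is checking that $d\ge1$ whenever $m\ge2$ (handled in Step 2) and that the threshold $(r-2)^2\ge\frac32k-4$ is exactly what makes the final linear inequality close at the worst case $m=k-2$.
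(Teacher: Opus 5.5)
Your proposal is correct and follows essentially the same route as the paper. Both arguments rewrite $w(F)$ via Theorem~\ref{Lower bound of P1 and P_12}, use $(r-2)^2\ge\frac32k-4$ to reduce the claim to $|F|-m\ge\frac{m-1}{k-3}$, and settle the case $m\ge k-1$ with Theorem~\ref{structure of T with T>=k+1}. Your separate treatment of $d=0$ is slightly more careful than the paper's remark $|F|\ge m+1$, which fails literally for a single-edge cluster, although the inequality is trivial there.
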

\begin{proof}
    Suppose that $F$ is obtained by merging $1$-clusters $T_1,\cdots,T_m\in \mathcal{M}_1$ with $|T_i|=e_i$. Then by Theorem~\ref{Lower bound of P1 and P_12}, we have $$|F|=\sum_{i\in [m]}e_i,\qquad |P_1(F)|=\sum_{i\in [m]}\left(e_i\binom{r}{2}-e_i+1\right)$$ and $$|P_{\overline{1}2}(F)|\geq 1-m+\sum_{i\in [m]}(e_i-1)(r-2)^2.$$ Therefore, $$\begin{aligned}
        w(F)&=\sum_{xy\in \binom{V(F)}{2}}w_F(xy)=|P_1(F)|+\frac{2}{k-2}|P_{\overline{1}2}(F)|\\&\geq \sum_{i\in [m]}\left(e_i\binom{r}{2}-e_i+1\right)+\frac{2}{k-2}(1-m)+\frac{2}{k-2}\sum_{i\in [m]}(e_i-1)(r-2)^2\\ &=\left(\frac{1}{2}r^2-\frac{1}{2}r+\frac{2}{k-2}(r-2)^2-1\right)|F|-\left(\frac{2}{k-2}r^2-\frac{8r}{k-2}+\frac{12-k}{k-2}\right)m+\frac{2}{k-2}\\&=\left(\frac{2}{k-2}r^2-\frac{8r}{k-2}+\frac{10-k}{k-2}+\frac{1}{2}r^2-\frac{1}{2}r\right)|F|-\left(\frac{2}{k-2}r^2-\frac{8r}{k-2}+\frac{12-k}{k-2}\right)m+\frac{2}{k-2}
    \end{aligned}$$
    To show that $w(F)\geq \binom{r}{2}|F|$, it suffices to prove that $$(2r^2-8r+10-k)|F|\geq (2r^2-8r+12-k)m-2,$$ i.e., $$|F|\geq m+\frac{m-1}{r^2-4r-\frac{k-10}{2}}.$$
    Since $r\geq 2+\sqrt{\frac{3}{2}k-4}$, we have $r^2-4r-\frac{k-10}{2}\geq k-3$. Note that $|F|\geq m+1$, if $m\leq k-2$ then we obtain that $$|F|\geq m+1\geq m+\frac{m-1}{k-3}\geq m+\frac{m-1}{r^2-4r-\frac{k-10}{2}}.$$ If $m\geq k-1$, then $|F|\geq m+1\geq k$. Since $G$ is $\mathcal{G}_k^{(r)}$-free, we have that $|F|\geq k+1$ and then by Theorem~\ref{structure of T with T>=k+1}, $|F|\geq 2m-k+3$. Thus, $$|F|\geq 2m-k+3\geq m+\frac{m-1}{k-3}\geq m+\frac{m-1}{r^2-4r-\frac{k-10}{2}}.$$
\end{proof}

Now we are ready to prove the upper bound of Theorem~\ref{main large r even k sqrtk}
\begin{proof}[Proof of the upper bound of Theorem~\ref{main large r even k sqrtk}]
    For a given $\mathcal{G}_k^{(r)}$-free $r$-graph $G$ with $n$ vertices and any $F \in \mathcal{M}_2$ with $xy \in \binom{V(F)}{2}$, define $w_F(xy)$ as above. Then $$|G|=\sum_{F\in \mathcal{M}_2}|F|\leq \binom{r}{2}^{-1}w(F)\leq \binom{r}{2}^{-1}\binom{{n}}{2},$$ which finishes the proof of the upper bound for $r\geq 2+\sqrt{\frac{3}{2}k-4}$.
\end{proof}

\section{Concluding remarks}    
We mainly study the Brown--Erd\H{o}s--S\'{o}s problem in the case when $k \ge 4$ is even. In particular, we are interested in the following problem.

\begin{problem}
Given an even integer $k \ge 4$, determine the smallest $r_0(k)$ such that 
$$\pi(r,k)=\frac{1}{r^2-r}, \text{ for all }r \ge r_0(k).$$
\end{problem}

%Inspired by the method of~\cite{glock20246-k4,Glock_Kim_Lichev_Pikhurko_Sun_2025,pikhurko2025quadratic-8-edges}, we prove that when $r \ge 2+\sqrt{\frac{3}{2}k-4}$, $$ \lim_{n\to\infty} f^{(r)}(n; (r-2)k+2,k)=\frac{1}{r^2-r}. $$
Our approach, inspired by the method of~\cite{glock20246-k4,Glock_Kim_Lichev_Pikhurko_Sun_2025,pikhurko2025quadratic-8-edges}, avoids dealing with delicate local structural details and shows that $r_0(k)$ can be chosen as $\Theta(k^{1/2})$, which improves a result of Letzter and Sgueglia~\cite{glock20246-k4} that $r_0(k)$ can be chosen as $\Theta(k^{3/2})$.

Moreover, note that when $k=4$, we show that $r_0(4)=4$, which agrees with the result in~\cite{glock20246-k4}. When $k\in\{6,8\}$, our result shows that $r_0(k) \le 5$. To show $r_0(k) = 4$ for $k\in\{6,8\}$, which is optimal, the authors of ~\cite{Glock_Kim_Lichev_Pikhurko_Sun_2025} and~\cite{pikhurko2025quadratic-8-edges} set up a more delicate weight assignment.

\section{Acknowledgment}

Yan Wang is supported by National Key R\&D Program of China under grant No. 2022YFA1006400, National Natural Science Foundation of China under grant No. 12571376 and SJTU-Warwick Joint Seed Fund.

%\newpage
\bibliographystyle{abbrv}
\bibliography{main}

@article{Glock_Kim_Lichev_Pikhurko_Sun_2025,
  title = {On the ($k + 2, k$)-problem of {Brown}, {Erd\H{o}s}, and {S\'{o}s} for $k = 5,6,7$},
  journal = {Canadian Journal of Mathematics},
  author = {Glock, Stefan and Kim, Jaehoon and Lichev, Lyuben and Pikhurko, Oleg and Sun, Shumin},
  year = {2025},
  pages = {1--43},
  doi = {10.4153/S0008414X25000021}
}

@article{letzter2025problemSIAM-k1.5,
  title={On a problem of {Brown}, {Erd\H{o}s}, and {S\'{o}s}},
  author={Letzter, Shoham and Sgueglia, Amedeo},
  journal={Proceedings of the American Mathematical Society},
  volume={153},
  number={07},
  pages={2729--2743},
  year={2025}
}

@article{pikhurko2025quadratic-8-edges,
title = {On the quadratic 8-edge case of the {Brown--Erd\H{o}s--S\'{o}s} problem},
journal = {European Journal of Combinatorics},
volume = {135},
pages = {104364},
year = {2026},
issn = {0195-6698},
doi = {https://doi.org/10.1016/j.ejc.2026.104364},
url = {https://www.sciencedirect.com/science/article/pii/S0195669826000326},
author = {Oleg Pikhurko and Shumin Sun}
}

@misc{brown1973some-BES-original,
  title={Some extremal problems on r-graphs, New directions in the theory of graphs (Proc. Third Ann Arbor Conf., Univ. Michigan, Ann Arbor, Mich, 1971)},
  author={Brown, WG and Erdos, P and S{\'o}s, VT},
  year={1973},
  publisher={Academic Press, New York}
}

@article{glock2019triple-pi33-BLMS,
  title={Triple systems with no three triples spanning at most five points},
  author={Glock, Stefan},
  journal={Bulletin of the London Mathematical Society},
  volume={51},
  number={2},
  pages={230--236},
  year={2019},
  publisher={Wiley Online Library}
}

@article{glock20246-k4,
  title={On the (6, 4)-problem of {Brown}, {Erd\H{o}s}, and {S\'{o}s}},
  author={Glock, Stefan and Joos, Felix and Kim, Jaehoon and K{\"u}hn, Marcus and Lichev, Lyuben and Pikhurko, Oleg},
  journal={Proceedings of the American Mathematical Society, Series B},
  volume={11},
  number={17},
  pages={173--186},
  year={2024}
}

@article{delcourt2024limit-PAMS-Resloved,
  title={The limit in the (k+2,k)-problem of {Brown}, {Erd\H{o}s}, and {S\'{o}s} exists for all k $\geq$ 2},
  author={Delcourt, Michelle and Postle, Luke},
  journal={Proceedings of the American Mathematical Society},
  volume={152},
  number={05},
  pages={1881--1891},
  year={2024}
}

@article{shangguan2023degenerate,
  title={Degenerate {Tur{\'a}n} densities of sparse hypergraphs {II}: a solution to the {Brown-Erd{\H{o}}s-S{\'o}s} problem for every uniformity},
  author={Shangguan, Chong},
  journal={SIAM Journal on Discrete Mathematics},
  volume={37},
  number={3},
  pages={1920--1929},
  year={2023},
  publisher={SIAM}
}

@article{rodl1985packing,
  title={On a packing and covering problem},
  author={R{\"o}dl, Vojt{\v{e}}ch},
  journal={European Journal of Combinatorics},
  volume={6},
  number={1},
  pages={69--78},
  year={1985},
  publisher={Elsevier}
}

@article{CONLON20231JCTB,
title = {A new bound for the {Brown–Erd\H{o}s–S\'{o}s} problem},
journal = {Journal of Combinatorial Theory, Series B},
volume = {158},
pages = {1-35},
year = {2023},
issn = {0095-8956},
doi = {https://doi.org/10.1016/j.jctb.2022.08.005},
url = {https://www.sciencedirect.com/science/article/pii/S0095895622000818},
author = {David Conlon and Lior Gishboliner and Yevgeny Levanzov and Asaf Shapira}
}

@article{Erdos_Frankl_Rodl_1986,
  author  = {Erd\H{o}s, P. and Frankl, P. and R\"{o}dl, V.},
  title   = {The asymptotic number of graphs not containing a fixed subgraph and a problem for hypergraphs having no exponent},
  journal = {Graphs and Combinatorics},
  volume  = {2},
  number  = {1},
  pages   = {113--121},
  year    = {1986}
}

@article{Ruzsa_Szemeredi_1978,
  author  = {Ruzsa, Imre Z. and Szemer\'{e}di, Endre},
  title   = {Triple systems with no six points carrying three triangles},
  journal = {Colloquia Mathematica Societatis J\'{a}nos Bolyai},
  volume  = {18},
  pages   = {939--945},
  year    = {1978},
  note    = {Combinatorics (Keszthely, 1976)}
}

@article{Conlon_Fox_2013_survey,
  author  = {Conlon, D. and Fox, J.},
  title   = {Graph removal lemmas},
  journal = {Surveys in Combinatorics},
  volume  = {409},
  pages   = {1--49},
  year    = {2013}
}

@article{Roth_1953_3ap,
  author  = {Roth, K. F.},
  title   = {On certain sets of integers},
  journal = {Journal of the London Mathematical Society},
  volume  = {28},
  number  = {1},
  pages   = {104--109},
  year    = {1953}
}

@article{Gowers_2007,
  author  = {Gowers, W. T.},
  title   = {Hypergraph regularity and the multidimensional Szemer\'{e}di theorem},
  journal = {Annals of Mathematics},
  volume  = {166},
  pages   = {897--946},
  year    = {2007}
}

@article{Nagle_Rodl_Schacht_2006,
  author  = {Nagle, B. and R\"{o}dl, V. and Schacht, M.},
  title   = {The counting lemma for regular k-uniform hypergraphs},
  journal = {Random Structures \& Algorithms},
  volume  = {28},
  number  = {2},
  pages   = {113--179},
  year    = {2006}
}

@article{Rodl_Skokan_2004_regular,
  author  = {R\"{o}dl, V. and Skokan, J.},
  title   = {Regularity lemma for k-uniform hypergraphs},
  journal = {Random Structures \& Algorithms},
  volume  = {25},
  number  = {1},
  pages   = {1--42},
  year    = {2004}
}

@article{Rodl_Skokan_2006_apply,
  author  = {R\"{o}dl, V. and Skokan, J.},
  title   = {Applications of the regularity lemma for uniform hypergraphs},
  journal = {Random Structures \& Algorithms},
  volume  = {28},
  number  = {2},
  pages   = {180--194},
  year    = {2006}
}

@article{Sarkozy_Selkow_2004_k+d,
  author  = {S\'{a}rk\"{o}zy, G. N. and Selkow, S.},
  title   = {An extension of the {Ruzsa-Szemer\'{e}di theorem}},
  journal = {Combinatorica},
  volume  = {25},
  number  = {1},
  pages   = {77--84},
  year    = {2004}
}

@article{Solymosi_Solymosi_2017_small_k,
  author  = {Solymosi, D. and Solymosi, J.},
  title   = {Small cores in 3-uniform hypergraphs},
  journal = {Journal of Combinatorial Theory, Series B},
  volume  = {122},
  pages   = {897--910},
  year    = {2017}
}

@article{shapira2021ramsey,
  title={A Ramsey variant of the {Brown--Erd{\H{o}}s--S{\'o}s conjecture}},
  author={Shapira, Asaf and Tyomkyn, Mykhaylo},
  journal={Bulletin of the London Mathematical Society},
  volume={53},
  number={5},
  pages={1453--1469},
  year={2021},
  publisher={Wiley Online Library}
}

@article{Alon_Shapira_2006,
  author  = {Alon, N. and Shapira, A.},
  title   = {On an extremal hypergraph problem of {Brown, Erd\H{o}s and S\'{o}s}},
  journal = {Combinatorica},
  volume  = {26},
  number  = {6},
  pages   = {627--646},
  year    = {2006}
}
 
\end{document}